\newcommand{\beq}{\begin{equation}}
\newcommand{\eeq}{\end{equation}}
\DeclareMathOperator*{\argsup}{arg\,sup}
\newtheorem{theorem}{Theorem}[section]
\newtheorem{lemma}[theorem]{Lemma}
\newtheorem{proposition}[theorem]{Proposition}
\newtheorem{definition}[theorem]{Definition}
\newtheorem{remark}[theorem]{Remark}
\newtheorem{Assumptions}[theorem]{Assumption}
\newcommand{\PP}{ \mathbb{P}}
\newcommand{\LL}{ \mathcal{L}}
\renewcommand*\@biblabel[1]{}
\begin{document} 
\title{\textbf{On a Strategic Model of Pollution Control}}%\footnote{Financial support by the German Research Foundation (DFG) via grant Ri 1128-4-2 is gratefully acknowledged.}}
\author{Giorgio Ferrari\thanks{Center for Mathematical Economics, Bielefeld University, Germany; \texttt{giorgio.ferrari@uni-bielefeld.de}} \and Torben Koch\thanks{Corresponding author. Center for Mathematical Economics, Bielefeld University, Germany; \texttt{t.koch@uni-bielefeld.de}}}
\date{\today}
\maketitle

\vspace{0.5cm}

{\textbf{Abstract.}} This paper proposes a strategic model of pollution control. A firm, representative of the productive sector of a country, aims at maximizing its profits by expanding its production. Assuming that the output of production is proportional to the level of pollutants' emissions, the firm increases the level of pollution. The government of the country aims at minimizing the social costs due to the pollution, and introduces regulatory constraints on the emissions' level, which then effectively cap the output of production. Supposing that the firm and the government face both proportional and fixed costs in order to adopt their policies, we model the previous problem as a stochastic impulse two-person nonzero-sum game. The state variable of the game is the level of the output of production which evolves as a general linearly controlled one-dimensional It\^o-diffusion. Following an educated guess, we first construct a pair of candidate equilibrium policies and of corresponding equilibrium values, and we then provide a set of sufficient conditions under which they indeed realize an equilibrium. Our results are complemented by a numerical study when the (uncontrolled) output of production evolves as a geometric Brownian motion, and the firm's operating profit and the government's running cost functions are of power type. An analysis of the dependency of the equilibrium policies and values on the model parameters yields interesting new behaviors that we explain as a consequence of the strategic interaction between the firm and the government.

\smallskip

{\textbf{Key words}}: pollution; stochastic impulse nonzero-sum game; verification theorem; diffusions.

\smallskip

{\textbf{OR/MS subject classification}}: Environment: pollution; Games/group decisions: stochastic; Probability: stochastic model applications; Dynamic programming/optimal control: Markov.

\smallskip

{\textbf{JEL subsject classification}}: C61; C73; Q52.

\smallskip

{\textbf{MSC2010 subsject classification}}: 93E20; 91B70; 91A15; 91B76.

\section{Introduction}
\label{introduction}

In recent years, the growing importance of global environmental issues, such as the global warming, pushed countries or institutions to adopt environmental policies aiming at reducing the level of pollution. Some of these policies are the result of international agreements (such as the Kyoto Protocol, or the Paris Climate Agreement of 2016); some others are adopted more on a local scale: it is indeed a news of December 2016 that the authorities of Beijing issued a five-day warning and ordered heavy industries to slow or halt their production due to increasing smog.\footnote{See, e.g., https://www.theguardian.com/world/2016/dec/17/beijing-smog-pollution-red-alert-declared-in-china-capital-and-21-other-cities.}

Environmental problems have attracted the interest of the scientific community as well (see, e.g, Nordhaus (1994), and Chapter 9 of Perman et al.\ (2003) for an exhaustive introduction to pollution control policies). Many papers in the mathematical and economic literature take the point of view of a social planner to model the problem of reducing emissions of pollutants arising from the production process of the industrial sector. For example, in Pindyck (2000) and Pindyck (2002) a social planner aims at finding a time at which the reduction of the rate of emissions gives rise to the minimal social costs. In Pommeret and Prieur (2013) the optimal environmental policy to be adopted is the one that maximizes the economy's instantaneous net payoff, i.e.\ the sum of the economic damage of pollution and of the economic benefits from production. Finally, Goulder and Mathai (2000) and Schwoon and Tol (2006) consider the planner's problem of choosing the abatement policy, and research and development investment, that minimize the costs of achieving a given target of CO$_2$ concentration. All those works tackle the resulting mathematical problems with techniques from (stochastic) optimal control theory, and provide policy recommendations.

In this paper we do not take the point of view of a fictitious social planner, but we propose a \emph{strategic model} of pollution control. An infinitely-lived profit maximizing firm, representative of the productive sector of a country, produces a single good, and faces fixed and proportional costs of capacity expansion. In line with other papers in the environmental economics literature (cf.\ Pindyck (2002) and Pommeret and Prieur (2013)), we suppose that the output of production is proportional to the level of pollutants' emissions. Those are negatively perceived by the society, and we assume that the social costs of pollution can be measured by a suitable penalty function. A government (or a government environmental agency) intervenes in order to dam the level of emissions, e.g., by introducing regulatory constraints on the emissions' level, which then effectively cap the output of production. We suppose that the interventions of the government have also some negative impact on the social welfare (e.g., they might cause an increase in the level of unemployment or foregone taxes), and  we assume that such negative externality can be quantified in terms of instantaneous costs with fixed and proportional components. The government thus aims at minimizing the total costs of pollution and of the interventions on it. 

Due to the fixed costs of interventions faced by the firm and the government, it is reasonable to expect that the two agents intervene only at discrete times on the output of production. Between two consecutive intervention times, the latter is assumed to evolve as a general regular one-dimensional It\^o-diffusion\footnote{Uncertain capital depreciation or technological uncertainty might justify the stochastic nature of the output of production (see also Asea and Turnovsky (1998), Eaton (1981), Epaulard and Pommeret (2003) and W\"alde (2011)).}. We therefore model the previously discussed pollution control problem as a \emph{stochastic impulse\footnote{Stochastic impulse control problems naturally arise in many areas of applications. Among these we refer to optimal control of exchange and interest rates (Cadenillas and Zapatero (1999), Mitchell et al.\ (2014), Perera et al.\ (2016), among others), portfolio optimization with fixed transaction costs (Korn (1999)), optimal inventory control (Bensoussan et al.\ (2010), and Harrison et al.\ (1983)), rational harvesting of renewable resources (Alvarez (2004)), and optimal dividend problems (Cadenillas et al.\ (2006)).} nonzero-sum game} between the government and the firm. The policy of each player is a pair consisting of a sequence of times, and a sequence of sizes of interventions on the output of production, and each player aims at picking a policy that optimizes her own performance criterion, given the policy adopted by the other player. The two players thus interact strategically in order to determine an equilibrium level of the output of production, i.e.\ of the level of pollutants' emissions.

Following an educated guess, we first construct a couple of candidate equilibrium policies, and of associated equilibrium values. In particular, we suppose that the equilibrium policies adopted by the firm and the government are characterized by four constant trigger values: on the one hand, whenever the output of production falls below a constant threshold, we conjecture that it is optimal for the firm to push the output of production to an upper constant level; on the other hand, whenever the level of emissions reaches an upper threshold, the government should provide regulatory constraints which let the output of production jump to a constant lower value. It turns out that, by employing these policies, the two agents keep the output of production (equivalently, the level of pollutants' emissions) within an interval whose size is the result of their strategic interaction.

In order to choose those four trigger values we require that the agents' performance criteria associated to the previous policies are suitably smooth, as functions of the current output of production level. Namely, each agent imposes that her own candidate equilibrium value is continuously differentiable at her own trigger values. We then move on proving a verification theorem which provides sufficient conditions under which the previous candidate strategies indeed form an equilibrium. In particular, we show that if the solution of a suitable system of four highly nonlinear algebraic equations exists and satisfies a set of appropriate inequalities, then such a solution will trigger an equilibrium. Our results are finally complemented by a numerical study in the case of (uncontrolled) output of production given by a geometric Brownian motion. Also, we discuss the dependency of the trigger values and of the equilibrium impulses' size on the model parameters. This comparative statics analysis shows interesting new behaviors that we explain as a consequence of the strategic interaction between the firm and the government. As an example, we find, surprisingly, that the higher the fixed costs for the firm, the smaller the sizes of the impulses applied by both the agents on the production process.

The contribution of this paper is twofold. On the one hand, we propose a general strategic model that highlights the interplay between the productive sector and the government of a country for the management of the pollution which inevitably arises from the production process.\footnote{For other works modeling the pollution control problem as a dynamic game one can refer, among others, to the example in Section 4 of De Angelis and Ferrari (2016), Long (1992) and van der Ploeg and de Zeeuw (1991).}. On the other hand, from a mathematical point of view, ours is one of the first papers dealing with a two-player nonzero-sum stochastic impulse game. It is worth noticing that a verification theorem for two-player nonzero-sum stochastic impulse games, in which the uncontrolled process is a multi-dimensional It\^o-diffusion, has been recently proved in A{\"i}d et al.\ (2018). There the authors give a set of sufficient conditions under which the solutions (in an appropriate sense) of a system of coupled constrained PDE problems (the so-called quasi-variational inequalities, QVIs\footnote{The interested reader may refer to the book by Bensoussan and Lions (1984) for the theory of QVIs.}) identify equilibrium values of the game. Then, they consider a one-dimensional symmetric game with linear running costs, and obtain equilibrium values and equilibrium policies by finding the solutions of the related system of QVIs, and by verifying their optimality.

Our methodology is different with respect to that of A{\"i}d et al.\ (2018). Here we obtain candidate equilibrium values without relying on solving the system of QVIs that would be associated to our game. Indeed, our candidate equilibrium values are constructed as the performance criteria that the players obtain by applying a potentially suboptimal policy. This construction, which employs probabilistic properties of one-dimensional It\^o-diffusions, has been already used in single-agent impulse control problems (see, e.g., Alvarez (2004), Alvarez and Lempa (2008) and Egami (2008)), and has the advantage of providing candidate equilibrium values which are automatically continuous functions of the underlying state variable. As a computationally useful byproduct, in our asymmetric setting we only have to find the four equilibrium trigger values, and for that we only need four equations. This is in contrast to the eight equations one would obtain by imposing $C^0$ and $C^1$-regularity of the solutions to the system of QVIs (cf.\ A{\"i}d et al.\ (2018)).

The rest of the paper is organized as follows. In Section \ref{sec:setting} we introduce the setting and formulate the problem. In Section \ref{sec:representation} we construct candidate equilibrium policies and candidate equilibrium values, whereas in Section \ref{sec:verification} we provide a verification theorem. Finally, in Section \ref{sec:numeric} we provide the numerical solution to an example, and we study the dependency of the equilibrium with respect to the model parameters. Conclusions are finally drawn in Section \ref{conclusions}.

%%%%%%%%%%%%%%%%%%%%%%%%%%%%%%%%%%%%%%%%%%%%%%%%%%%%%%%%%%%%%%%%%%%%%%%%%%%%%%%%%%%%%%%%%%%%%%%%%%%%%%%%%%%%%%%%%%%%%%%%%%%%%%%%%%%%%%%%%%%%%%%%%%%%%%%%

\section{Setting and Problem Formulation}
\label{sec:setting}

We consider a firm (agent $1$), and a government (agent $2$). The firm produces a single good, and its profits from production are described by a function $\pi:\mathbb{R}_+\mapsto\mathbb{R}_+$ which is continuous, strictly concave and increasing. We assume that the production process leads to emissions, for example of greenhouse gases such as CO$_2$, that are proportional to the level of the output (see also Pindyck (2002) and Pommeret and Prieur (2013), among others). These emissions have a negative externality on the social welfare, and the resulting disutility incurred by the society is measured by a cost function $C:\mathbb{R}_+\mapsto\mathbb{R}_+$ that depends on the rate of emissions. The function $C$ is continuous, strictly convex and increasing.

The production process is assumed to be stochastic, since it may depend on uncertain capital depreciation or other exogenous random factors (see also Asea and Turnovsky (1998), Bertola (1998), Epaulard and Pommeret (2003) and W\"alde (2011), among others). In particular, let $W=(W_t)_{t\geq0}$ be a one-dimensional, standard Brownian motion on a complete filtered probability space $(\Omega,\mathcal{F},\mathbb{F},\mathbb{P})$, where $\mathbb{F}:=(\mathcal{F}_t)_{t\geq 0}$ is a filtration satisfying the usual conditions. The output of production at time $t\geq 0$ is denoted by $X_t$, and it evolves as a linear It\^o-diffusion on $(0,\infty)$; that is,
\begin{align}
\label{X1}
dX_t= \mu(X_t) dt+\sigma(X_t)dW_t, \qquad X_0=x>0,
\end{align}
for some Borel-measurable functions $\mu,\sigma$ to be specified. Here, $\mu$ is the trend of the production, while $\sigma$ is a measure of the fluctuations around this trend.

To account for the dependency of $X$ on its initial level, from now on we shall write $X^x$ where appropriate, and $\mathbb{P}_x$ to refer to the probability measure on $(\Omega,\mathcal{F})$ such that $\PP_x(\,\cdot\,) = \mathbb{P}(\,\cdot\,| X_0=x)$, $x \in (0,\infty)$. Throughout this paper we will equivalently use the notations $\mathbb{E}[f(X^x_t)]$ and $\mathbb{E}_x[f(X_t)]$, $f:\mathbb{R} \to \mathbb{R}$ Borel-measurable and integrable, to refer to expectations under the measure $\mathbb{P}_x$.

For the coefficients of the SDE \eqref{X1} we make the following assumption, which will hold throughout the paper.
\begin{Assumptions}
\label{ass:D2}
The functions $\mu:\mathbb{R} \mapsto \mathbb{R}$ and $\sigma: \mathbb{R} \mapsto (0,\infty)$ are such that
\beq
\label{YamadaWatanabe}
|\mu(x)-\mu(y)| \leq K|x-y|,\qquad |\sigma(x)-\sigma(y)| \leq h(|x-y|), \qquad x,y \in (0,\infty),
\eeq
for some $K>0$, and $h:\mathbb{R}_+ \mapsto \mathbb{R}_+$ strictly increasing such that $h(0) = 0$ and
\beq
\label{YamadaWatanabe2}
\int_{(0,\varepsilon)}\frac{du}{h^2(u)} = \infty\,\,\,\,\mbox{for every}\,\,\varepsilon>0.
\eeq
\end{Assumptions}

As a consequence of the above assumption one has that if a solution to \eqref{X1} exists, then it is pathwise unique by the Yamada-Watanabe's Theorem (cf.\ Karatzas and Shreve (1998), Proposition $5.2.13$ and Remark $5.3.3$, among others). Moreover, from \eqref{YamadaWatanabe} and \eqref{YamadaWatanabe2} it follows that for every $x \in (0,\infty)$ there exists $\varepsilon>0$ such that
\beq
\label{LI}
\int_{x-\varepsilon}^{x+\varepsilon}\frac{1 + |\mu(y)|}{\sigma^{2}(y)}\,dy < +\infty.
\eeq
Local integrability condition \eqref{LI} implies that \eqref{X1} has a weak solution (up to a possible explosion time) that is unique in the sense of probability law (cf.\ Karatzas and Shreve (1998), Section $5.5$C). Therefore, \eqref{X1} has a unique strong solution (possibly up to an explosion time) due to Karatzas and Shreve (1998), Corollary $5.3.23$. One-dimensional diffusions like the geometric Brownian motion and the CIR process (under a suitable restriction on the parameters, i.e.\ the so-called Novikov's condition) satisfy the assumptions of our setting.

\begin{remark}
\label{remark1}
An example of microfoundation for a stochastic dynamics of the output of production is the following (cf.\ Bertola (1998)). Assume that at time $t\geq 0$ the output of production $X_t$ is given in terms of the capital stock, $K_t$, and the output of labor, $L_t$, by 
	\begin{align}\label{r1}
	X_t=\big(K_t^\rho L_t^{1-\rho}\big)^\gamma,\quad 0<\rho\leq 1,\text{ and }\gamma>0.
	\end{align}
	Also, suppose that the firm is faced with a constant elasticity demand function
	\begin{align}\label{r2}
	P_t=X_t^{\lambda-1},\quad 0<\gamma\lambda<1,
	\end{align}
	where $P_t$ is the product price at time $t\geq 0$, and $\lambda$ is a measure of the firm's monopoly power. Since the input of labor $L_t$ is chosen such that $L_t=\arg\max_L\big\{P_tX_t-wL\big\}$, for some wage $w>0$, one can obtain from \eqref{r1} and \eqref{r2} that
	\begin{align}
	\label{Lt}
	L_t=\bigg[\frac{\gamma\lambda}{w}(1-\rho)\bigg]^\frac{1}{1-(1-\rho)\gamma\lambda}K_t^\frac{\rho\gamma\lambda}{1-(1-\rho)\gamma\lambda}=\hat{\alpha} K_t^\frac{\rho\gamma\lambda}{1-(1-\rho)\gamma\lambda},
	\end{align}
	where $\hat{\alpha}:=\big[\frac{\gamma\lambda}{w}(1-\rho)\big]^\frac{1}{1-(1-\rho)\gamma\lambda}$. Hence, by plugging \eqref{Lt} into \eqref{r1} we have
	\begin{align}\label{r3}
	X_t=\hat{\alpha}^{(1-\rho)\gamma}K_t^{\frac{\gamma\rho}{1-(1-\rho)\gamma\lambda}}.
	\end{align} 
	If now capital stock is stochastic and depreciates at a rate $\delta>0$, i.e. $dK_t=-\delta K_tdt+\sigma K_tdW_t$ for some Brownian motion $W$ (see, e.g., W\"alde (2011)), by It\^o's formula one finds that $X_t$ evolves as  
	\begin{align*}
	dX_t=\hat{\mu}X_tdt+\hat{\sigma}X_tdW_t,
	\end{align*}
	for suitable constants $\hat{\mu},\hat{\sigma}$. 
\end{remark}

Both the agents can influence the process of production: on the one hand, the firm can instantaneously increase the level of production, for example by increasing the capital stock. This leads to instantaneous costs for the firm which have both a variable and a fixed component, and that we model through a function $g_1: \mathbb{R}_+\mapsto\mathbb{R}_+$ of the size of interventions on the production. In particular we take
  \begin{align*}
  g_1(\xi):=K_1+\kappa_1 \xi,\quad \xi \geq 0,
  \end{align*}
  with $K_1, \kappa_1>0$.
  On the other hand, the government can introduce regulatory constraints that effectively force the firm to decrease the level of production\footnote{Restrictions on the output of production can be achieved by the government in different ways. The interested reader may refer to the classical book by Pigou (1938).}, hence of the emissions. A similar situation has happened in December 2016 in Beijing where authorities issued a five-day warning and ordered heavy industries to slow or halt production in order to reduce the smog in the air. We assume that the instantaneous costs of a similar policy incurred by the government can be measured by a function $g_2:\mathbb{R}_+\mapsto\mathbb{R}_+$ given by
   \begin{align*}
   g_2(\eta):=K_2+\kappa_2 \eta, \quad \eta \geq 0,
   \end{align*}
   with $K_2, \kappa_2>0$. Such costs might arise because of an increase in the rate of unemployment or forgone taxes due to a possible decrease of the production capacity.\par
  Because of the presence of fixed costs, it is reasonable to expect that the firm (resp.\ the government) intervenes only at discrete times on the output of production by shifting the current level of output up (resp.\ down) of some nonzero amount. More formally, the policy of any agent is defined as follows. 
 \begin{definition}\label{defpol}
 The policies $\varphi_1$ and $\varphi_2$ of the firm and of the government, respectively, are pairs
 \begin{align*}
 \varphi_1&:=(\tau_{1,1},\dots,\tau_{1,n},\dots;\xi_{1},\dots,\xi_{n},\dots),\\
 \varphi_2&:=(\tau_{2,1},\dots,\tau_{2,n},\dots;\eta_{1},\dots,\eta_{n},\dots)
 \end{align*}
 where $0 \leq \tau_{i,1} \leq \tau_{i,2} \leq\dots$, for $i=1,2$, is an increasing sequence of $\mathbb{F}$-stopping times, $\xi_{n}$ are positive $\mathcal{F}_{\tau_{1,n}}$-measurable random variables, and $\eta_{n}$ are positive $\mathcal{F}_{\tau_{2,n}}$-measurable random variables.
 \end{definition}
 Intervening on the output of production, the two agents modify the dynamics of the production process which then becomes
  \begin{align}
	\label{controlledX}
  \,
  \begin{cases}
  \displaystyle X^{x,\varphi_1,\varphi_2}_t=&x+\int\limits_{0}^{t}\mu(X^{x,\varphi_1,\varphi_2}_s) ds+\int\limits_{0}^{t}\sigma (X^{x,\varphi_1,\varphi_2}_s) dW_s\\
  &\displaystyle +\alpha\sum\limits_{k:\tau_{1,k}\leq t}\xi_k\prod\limits_{l\geq 1}\mathbbm{1}_{\{\tau_{1,k}\neq\tau_{2,l}\}}-\sum\limits_{k:\tau_{2,k}\leq t}\eta_k,\quad t\geq0,\\
  \displaystyle X^{x,\varphi_1,\varphi_2}_{0-}=&x>0,
  \end{cases}  
  \end{align}
  where $\alpha>0$ measures the effect of an increase in the capital stock on the output of production, and $X^{x,\varphi_1,\varphi_2}_{t-}:=\lim_{\varepsilon \downarrow 0}X^{x,\varphi_1,\varphi_2}_{t -\varepsilon}$ for any $t\geq0$. 
  
  In \eqref{controlledX} $\xi_k$ represents the lump-sum increase of the output of production made by the firm at time $\tau_{1,k}$. Moreover, $\eta_k$ is the impact on production of the regulatory constraints imposed by the government at time $\tau_{2,k}$. If both the agents are willing to intervene on the output of production at the same time, it is reasonable to allow the government to have the priority: the infinite product $\prod\limits_{l\geq 1}\mathbbm{1}_{\{\tau_{1,k}\neq\tau_{2,l}\}}$ in \eqref{controlledX} takes care of that. In the rest of this paper we write $X^{x,\varphi_1,\varphi_2}$ to stress the dependence of the output of production on its initial level, and on the policies $\varphi_1$ and $\varphi_2$ adopted by the two agents. 
  \begin{remark}
  	Following the microfoundation of Remark \ref{remark1}, suppose that at a certain time $\tau_{1,k}$ the firm increases the capital stock by an amount $\xi_k$, while the government does not intervene. Then we have by \eqref{r3} that
  	\begin{align*}
  	X_{\tau_k}=\hat{\alpha}^{(1-\rho)\gamma}K_{\tau_k}^\frac{\rho\gamma}{1-(1-\rho)\gamma\lambda}=\hat{\alpha}^{(1-\rho)\gamma}\Big(K_{\tau_k-}+\xi_k\Big)^\frac{\rho\gamma}{1-(1-\rho)\gamma\lambda}.
  	\end{align*}
  	Taking $\gamma>1$, for $\rho=\frac{1-\gamma\lambda}{\gamma-\gamma\lambda}\in(0,1)$ and $\lambda$ such that $\gamma\lambda\in(0,1)$, we find
  	\begin{align*}
  	X_{\tau_k}= X_{\tau_k-}+\hat{\alpha}^{(1-\rho)\gamma}\xi_k,
  	\end{align*}
  	that is consistent with \eqref{controlledX} if we set $\alpha:=\hat{\alpha}^{(1-\rho)\gamma}$.
  \end{remark}

The firm's total expected profits arising from production, net of present costs, are
 \begin{align}\label{J1}
 \mathcal{J}_1(x,\varphi_1,\varphi_2):=\mathbb{E}_x\bigg[\int\limits_{0}^{\infty}e^{-r_1t}\pi( X^{\varphi_1,\varphi_2}_t)dt-\sum\limits_{k\geq 1}e^{-r_1{\tau_{1,k}}}g_1(\xi_k)\mathds{1}_{\{\tau_{1,k}<\infty\}}\bigg],
 \end{align}
 where $r_1>0$ is the subjective discount factor of the firm.\par
Furthermore, the government's total expected costs arising from the emissions of pollutants is 
  \begin{align}\label{J2}
  \mathcal{J}_2(x,\varphi_1,\varphi_2):=\mathbb{E}_x\bigg[\int\limits_{0}^{\infty}e^{-r_2t}C(\beta X^{\varphi_1,\varphi_2}_t)dt+\sum\limits_{k\geq 1}e^{-r_2{\tau_{2,k}}}g_2(\eta_k)\mathds{1}_{\{\tau_{2,k}<\infty\}}\bigg],
  \end{align}
  for some $r_2>0$ and $\beta>0$. The constant $\beta$ is the proportional factor between the rate of emissions and the output of production, while $r_2$ characterizes the time preferences of the government.
\begin{remark}
	We notice that the choice of a constant $\beta>0$ in \eqref{J2}, and of a constant $\alpha>0$ in \eqref{controlledX} is just to simplify exposition. Indeed, the results of this paper do hold even if we allow for suitable state dependent $\beta(\cdot)$ or $\alpha(\cdot)$.
\end{remark}

The firm and the government pick their policies within the following admissible class.
\begin{definition}\label{admissible}
	For any initial level of the production $x>0$, we say that the policies $\varphi_1:=(\tau_{1,1},\dots,\tau_{1,n},\dots;\xi_1,\dots,\xi_n,\dots)$ and $\varphi_2:=(\tau_{2,1},\dots,\tau_{2,n},\dots;\eta_1,\dots,\eta_n,\dots)$ are admissible, and we write $(\varphi_1,\varphi_2)\in\mathcal{S}(x)$, if the following hold true:
	\begin{itemize}
		\item [(i)] There exists a unique strong solution to \eqref{controlledX} with right-continuous sample paths such that $X^{x,\varphi_1,\varphi_2}_t\geq 0$  $\mathbb{P}$-a.s.\ for all $t\geq 0$.
		\item [(ii)] The functionals \eqref{J1} and \eqref{J2} are finite; that is,
		\begin{itemize}
			\item [(a)] $\displaystyle \mathbb{E}_x\bigg[\int_{0}^{\infty}e^{-r_1t}\pi( X^{\varphi_1,\varphi_2}_t)dt+ \int_{0}^{\infty}e^{-r_2t}C(\beta X^{\varphi_1,\varphi_2}_t)dt\bigg]<\infty $,
			\item [(b)] $\displaystyle \mathbb{E}_x\bigg[\sum\limits_{k\geq 1}e^{-r_1{\tau_{1,k}}}g_1(\xi_k)\mathds{1}_{\{\tau_{1,k}<\infty\}}+\sum\limits_{k\ge 1}e^{-r_2{\tau_{2,k}}}g_2(\eta_k)\mathds{1}_{\{\tau_{2,k}<\infty\}}\bigg]<\infty$.
		\end{itemize}
		\item [(iii)] If $\tau_{i,k}=\tau_{i,k+1}$ for some $i=1,2$ and $k\geq 1$, then $\tau_{i,k}=\tau_{i,k+1}=\infty$ $\mathbb{P}_x$-a.s.
		\item[(iv)] One has $\lim\limits_{k\rightarrow\infty}\tau_{i,k}=+\infty$ $\mathbb{P}_x$-a.s.\ for $i=1,2$.
	\end{itemize}
\end{definition}

Notice that requirements (iii) and (iv) prevent each agent to act twice at the same time, and to accumulate her interventions. In order to ensure that $\mathcal{S}(x)\neq\emptyset$, we now make the following standing assumption.
\begin{Assumptions}
\label{Ass1}
	The total expected profits and costs associated to non-intervention policies are such that
	$$\displaystyle \mathbb{E}_x\bigg[\int_{0}^{\infty}e^{-r_1t}\pi( X_t)dt+ \int_{0}^{\infty}e^{-r_2t}C(\beta X_t)dt\bigg]<\infty.$$
\end{Assumptions}
\noindent Indeed, under Assumption \ref{Ass1}, it follows that the policies associated with no interventions, i.e.\ $\tau_{i,k}=\infty$ $\mathbb{P}_x$-a.s.,\ for any $i=1,2$ and $k\geq 1$, belong to $\mathcal{S}(x)$.

\begin{remark}
Notice that in the benchmark cases in which the uncontrolled output of production is such that $dX_t=\mu X_tdt+\sigma X_tdW_t$, i.e.\ $X_t=x\exp\{(\mu-\frac{1}{2}\sigma^2)t+\sigma W_t\}$, $\mu\in\mathbb{R}$, $\sigma>0$, and $\pi(x)=x^a$, $a \in (0,1)$, and $C(x)=x^b$, $b>1$, one has that Assumption \ref{Ass1} is satisfied by taking 
$$r_1>\bigg[\mu a-\frac{\sigma^2a}{2}(1-a)\bigg]^+\quad\text{ and }\quad r_2> \bigg[\mu b+\frac{\sigma^2b}{2}(b-1)\bigg]^+.$$
\end{remark}

Given the policy adopted by the other agent, the firm aims at maximizing its profit, whereas the government at minimizing the social costs of pollution. Hence, for any $x>0$, the two agents aim at finding $(\varphi_1^\ast,\varphi_2^\ast)\in\mathcal{S}(x)$ such that
\begin{align}\label{NE}
\begin{cases}
\mathcal{J}_1(x,\varphi_1^\ast,\varphi_2^\ast)&\geq \,\,\mathcal{J}_1(x,\varphi_1,\varphi_2^\ast),\quad \forall \varphi_1 \text{ such\,\,that }(\varphi_1,\varphi^\ast_2)\in\mathcal{S}(x),\\
\mathcal{J}_2(x,\varphi^\ast_1,\varphi^\ast_2)&\leq \,\,\mathcal{J}_2(x,\varphi^\ast_1,\varphi_2),\quad \forall \varphi_2 \text{ such\,\,that }(\varphi_1^\ast,\varphi_2)\in\mathcal{S}(x).
\end{cases}\tag{$\mathcal{P}$}
\end{align}
\begin{definition}\label{problem}
	Let $x>0$. If $(\varphi_1^\ast,\varphi_2^\ast)\in\mathcal{S}(x)$ satisfying \eqref{NE} exist, we call them equilibrium policies, and we define the equilibrium values as
	\begin{align*}
		V_1(x):=\mathcal{J}_1(x,\varphi^\ast_1,\varphi^\ast_2)\quad\text{ and }\quad
		V_2(x):=\mathcal{J}_2(x,\varphi^\ast_1,\varphi^\ast_2).
	\end{align*}
\end{definition}

%%%%%%%%%%%%%%%%%%%%%%%%%%%%%%%%%%%%%%%%%%%%%%%%%%%%%%%%%%%%%%%%%%%%%%%%%%%%%%%%%%%%%%%%%%%%%%%%%%%%%%%%%%%%%%%%%%%%%%%%%%%%%%%%%%%%%%%%%%%%%%%%

\section{Solving the Strategic Pollution Control Problem}
\label{sec:solution}
In this section, we first construct a pair of admissible candidate equilibrium policies. Then, under suitable requirements, we show that these policies indeed solve problem \eqref{NE}.

%%%%%%%%%%%%%%%%%%%%%%%%%%%%%%%%%%%%%%%%%%%%%%%%%%%%%%%%%%%%%%%%%%%%%%%%%%%%%%%%%%%%%%%%%%%%%%%%%%%%%%%%%%%%%%%%%%%%%%%%%%%%%%%%%%%%%%%%%%%%%%%%

\subsection{Construction of a Candidate Solution}
\label{sec:representation}
We conjecture that a solution $(\varphi_1^\ast,\varphi_2^\ast)$ solving \eqref{NE} exists and is characterized by three intervals of the real line. These are the so-called \emph{joint} \emph{inaction region}, where both agents do not intervene on the production process, and the \emph{action regions} of the firm and of the government, where the two agents independently intervene on the output of production. More precisely, we conjecture the following.
\begin{itemize}
	\item [(i)] The firm increases its production instantaneously by exerting an impulse whenever the output of production is such that $X_t\leq b^{1}_1$, for some $b^{1}_1>0$ to be found, and shifts the process upwards to $b^{1}_2$, where $b^{1}_2>b^{1}_1$. We therefore define the candidate \emph{firm's} \emph{action region} as $\mathcal{A}_1:=(0,b^1_1]$.
	\item [(ii)] The government introduces regulatory constraints whenever the level of production, hence of emissions, is too large, i.e. $X_t\geq b^{2}_2$, for some $b^2_2$ to be determined, and induces a shift of the process downwards to some $b^{2}_1$, where $b^{2}_2>b^{2}_1>b^1_1$. Hence, the candidate \emph{government's} \emph{action region} is given by $\mathcal{A}_2:=[b^2_2,\infty)$.
\end{itemize}
In the rest of this paper, we will denote by $\mathcal{I}_i:=\mathbb{R}_+\setminus \mathcal{A}_i$ the \emph{inaction region} of agent $i$. Notice that the previous conjecture assumes that the constant barriers $b^i_j,\,i,j=1,2,$ of the government (resp.\ the firm) are decided \emph{ex-ante} and do not dynamically react to the policy followed by the firm (resp.\ government). Therefore, they trigger \emph{precommitted policies} of the two agents.

Following the previous conjecture, for any $x>0$ given and fixed we set
$$\tilde{\varphi}_1:=(\tilde{\tau}^{\tilde{\varphi}_1,\varphi_2}_{1,1},\dots,\tilde{\tau}^{\tilde{\varphi}_1,\varphi_2}_{1,n},\dots;\tilde{\xi}_1,\dots,\tilde{\xi}_n,\dots) \quad \text{and} \quad \tilde{\varphi}_2:=(\tilde{\tau}^{{\varphi}_1,\tilde{\varphi}_2}_{2,1},\dots,\tilde{\tau}^{{\varphi}_1,\tilde{\varphi}_2}_{2,n},\dots;\tilde{\eta}_1,\dots,\tilde{\eta}_n,\dots),$$ 
where we have introduced: 
\begin{itemize}
\item[(a)] the sequence of the firm's intervention times $\{\tilde{\tau}_{1,k}^{\tilde{\varphi}_1,\varphi_2}\}_{k \geq 1}$ such that $\tilde{\tau}_{1,k}^{\tilde{\varphi}_1,\varphi_2}:=\inf\{t>\tilde{\tau}^{\tilde{\varphi}_1,\varphi_2}_{1,k-1}: X_t^{x,\tilde{\varphi}_1,{\varphi}_2}\leq b^1_1\}$ for all $\varphi_2$ such that $(\tilde{\varphi}_1,\varphi_2)\in\mathcal{S}(x)$, and with $\tilde{\tau}^{\tilde{\varphi}_1,\varphi_2}_{1,0}:=0$ $\mathbb{P}$-a.s.; 
\item[(b)] the sequence of the government's intervention times $\{\tilde{\tau}_{2,k}^{{\varphi}_1,\tilde{\varphi}_2}\}_{k\geq 1}$ such that $\tilde{\tau}_{2,k}^{{\varphi}_1,\tilde{\varphi}_2}:=\inf\{t>\tilde{\tau}^{{\varphi}_1,\tilde{\varphi}_2}_{2,k-1}: X_t^{x,{\varphi}_1,\tilde{\varphi}_2}\geq b^2_2\}$ for all $\varphi_1$ such that $({\varphi}_1,\tilde{\varphi}_2)\in\mathcal{S}(x)$, and with $\tilde{\tau}^{{\varphi}_1,\tilde{\varphi}_2}_{2,0}:=0$ $\mathbb{P}$-a.s.; 
\item[(c)] the sequence of interventions of the firm $\tilde{\xi}_k:=\frac{1}{\alpha}(b^{1}_2- X^{x,\tilde{\varphi}_1,\varphi_2}_{\tilde{\tau}_{1,k}^{\tilde{\varphi}_1,\varphi_2}-})$ for all $k\geq 1$ and $\varphi_2$ such that $(\tilde{\varphi}_1,\varphi_2)\in\mathcal{S}(x)$; 
\item[(d)] the sequence of impulses applied by the government $\tilde{\eta}_k:=X^{x,\varphi_1,\tilde{\varphi}_2}_{\tilde{\tau}_{2,k}^{{\varphi}_1,\tilde{\varphi}_2}-}-b^{2}_1$ for all $k\geq1$ and $\varphi_1$ such that $({\varphi}_1,\tilde{\varphi_2})\in\mathcal{S}(x)$. 
\end{itemize}
By the definition of $\tilde{\tau}_{1,k}^{\tilde{\varphi}_1,\varphi_2}$ and $\tilde{\tau}_{2,k}^{{\varphi}_1,\tilde{\varphi}_2}$ one has that the sequence of impulses $\tilde{\xi}_k$ and $\tilde{\eta}_k$ are constant-sized (apart the initial impulses, that depend on the initial state $x$). In particular, $\tilde{\xi}_k:=(b^{1}_2- b^1_1)/\alpha$ and $\tilde{\eta}_k:=b^2_2-b^{2}_1$ for all $k\geq2$, and $\tilde{\xi}_1:=(b^{1}_2- x \wedge b^1_1)/\alpha$ and $\tilde{\eta}_1:= x \vee b^2_2-b^{2}_1$.

Notice that the policies $(\tilde{\varphi}_1,\tilde{\varphi}_2)$ just described exist because the constant trigger values $b^i_j$, $i,j=1,2$, of agent $i=1,2$ do not depend on the policy employed by agent $j\neq i$. That is, independently of the action of agent $j$, agent $i$ will always force the process $X$ to stay in her \emph{inaction region} $\mathcal{I}_i$. A rigorous formalization of $(\tilde{\varphi}_1,\tilde{\varphi}_2)$ can be obtained by the arguments  employed in Definition 2.2 of A{\"i}d et al.\ (2018). We now show that the policies $(\tilde{\varphi}_1,\tilde{\varphi}_2)$ previously defined are in fact admissible.
\begin{lemma}
	\label{lemma1}
	Recall Definition \ref{admissible}. Then for any $x>0$ the policies $(\tilde{\varphi}_1,\tilde{\varphi}_2)\in\mathcal{S}(x)$.
\end{lemma}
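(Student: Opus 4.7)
My plan is to verify the four conditions of Definition \ref{admissible} in turn, using the crucial ordering $0 < b^1_1 < b^1_2 \wedge b^2_1 \leq b^1_2 \vee b^2_1 < b^2_2$ of the candidate trigger values: this ensures that after any intervention the production is reset to an interior point ($b^1_2$ or $b^2_1$) of the bounded strip $(b^1_1, b^2_2) \subset (0,\infty)$. For (i), between intervention times the dynamics \eqref{controlledX} reduces to the uncontrolled SDE \eqref{X1}, for which Assumption \ref{ass:D2} yields pathwise uniqueness by Yamada--Watanabe and the local integrability \eqref{LI} gives strong existence. Concatenating these strong solutions across the deterministic jumps at the intervention times produces a right-continuous strong solution of \eqref{controlledX} taking values in the compact set $[b^1_1, b^2_2] \cup \{x\} \subset (0,\infty)$, hence in particular nonnegative.

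For (iii) and (iv), I would merge both agents' interventions into a single increasing sequence $\{\sigma_n\}_{n\geq 1}$. For $n \geq 1$, $X_{\sigma_n} \in \{b^1_2, b^2_1\} \subset (b^1_1, b^2_2)$ by construction. Path continuity of the diffusion implies that the exit time $\tau := \inf\{t > 0 : X^y_t \notin (b^1_1, b^2_2)\}$ is strictly positive $\mathbb{P}_y$-a.s.\ for $y \in \{b^1_2, b^2_1\}$, and the strong Markov property identifies $\sigma_{n+1} - \sigma_n$ with such an exit time started from $X_{\sigma_n}$. In particular, consecutive interventions of the same agent are a.s.\ strictly separated, which is (iii). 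Moreover, $\tau > 0$ a.s.\ yields $q := \max_{y \in \{b^1_2, b^2_1\}} \mathbb{E}_y[e^{-r\tau}] < 1$ for every $r > 0$, and iterating the strong Markov property gives $\mathbb{E}[e^{-r\sigma_n}] \leq q^{n-1}$. Hence $\sum_n \mathbb{E}[e^{-r\sigma_n}] < \infty$, from which $\sigma_n \to \infty$ $\mathbb{P}$-a.s.\ and so $\tilde\tau_{i,k} \to \infty$ a.s.\ for $i=1,2$, giving (iv).

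For (ii)(a), since $X^{x,\tilde\varphi_1,\tilde\varphi_2}_t$ takes values in the compact set $[b^1_1, b^2_2] \cup \{x\}$, continuity of $\pi$ and $C$ makes the integrands uniformly bounded along the process, so the discounted expected integrals are finite. For (ii)(b), the impulses are constant for $k\geq 2$, namely $\tilde\xi_k = (b^1_2 - b^1_1)/\alpha$ and $\tilde\eta_k = b^2_2 - b^2_1$, while $\tilde\xi_1$ and $\tilde\eta_1$ are finite and depend only on $x$; hence $g_1(\tilde\xi_k) + g_2(\tilde\eta_k) \leq M(x)$ for some $M(x)<\infty$. Combining with $\sum_k \mathbb{E}[e^{-r_i \tilde\tau_{i,k}}] \leq \sum_n \mathbb{E}[e^{-r_i \sigma_n}] < \infty$ from the previous paragraph yields (ii)(b). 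The main technical point is the strict bound $\mathbb{E}_y[e^{-r\tau}] < 1$ for $y \in \{b^1_2, b^2_1\}$, which follows from $\tau > 0$ a.s.; once this is in hand, the remaining steps reduce to a routine strong Markov iteration.
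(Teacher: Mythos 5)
Your proof is correct and follows essentially the same route as the paper's: boundedness of the controlled process in $[b^1_1,b^2_2]$, continuity of $\pi$ and $C$, bounded impulse sizes, and a strong-Markov iteration showing the discounted intervention times have summable expectations. The only difference is one of detail, not of method: the paper delegates the existence/uniqueness in (i) to Lemma 2.3 of A\"id et al.\ (2018) and simply asserts (iii)--(iv) from the ordering of the thresholds, whereas you spell out the concatenation argument and derive $\sigma_n\to\infty$ from the summability of $\mathbb{E}[e^{-r\sigma_n}]$.
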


\begin{proof}
	Let $x > 0$ be given and fixed. Existence of a unique strong solution to \eqref{controlledX} with right-continuous paths can be obtained by arguing as in Lemma 2.3 of A{\"i}d et al.\ (2018). Also, $X_t^{x,\tilde{\varphi}_1,\tilde{\varphi}_2}\in[b^1_1,b^2_2] \subset [0,\infty)$ $\mathbb{P}$-a.s.\ for all $t>0$. Hence, Condition $(i)$ of Definition \ref{admissible} is satisfied. 
	
	The fact that $X_t^{x,\tilde{\varphi}_1,\tilde{\varphi}_2}\in[b^1_1,b^2_2]$ $\mathbb{P}$-a.s.\ for all $t>0$ and the continuity of $\pi$ and $C$ in particular imply that $(ii)-(a)$ of Definition \ref{admissible} is fulfilled. As for $(ii)-(b)$ note that $\tilde{\xi}_k\leq b^1_2/\alpha$ $\mathbb{P}_x$-a.s.\ for all $k\in \mathbb{N}$, and that $\tilde{\eta}_k\leq\max(b^2_2-b^2_1,x-b^2_1)$ $\mathbb{P}_x$-a.s.\ for all $k\in\mathbb{N}$. Hence there exists a positive constant $\Theta$ (possibly depending on $x$) such that $g_1(\tilde{\xi}_k) + g_2(\tilde{\eta}_k)\leq \Theta$ $\mathbb{P}_x$-a.s.\ for all $k\in\mathbb{N}$. In order to prove that $(ii)-(b)$ of Definition \ref{admissible} holds true, it thus suffices to show that for any $i=1,2$ one has
	\begin{align*}
	\mathbb{E}_x\bigg[\sum_{k\geq 1}e^{-r_i\tilde{\tau}^{\tilde{\varphi}_1,\tilde{\varphi}_2}_{i,k}}\bigg]<\infty.
	\end{align*}
To accomplish that one can adapt to our setting arguments from the proof of Proposition 4.7 in A{\"i}d et al.\ (2018). We provide these arguments here for the sake of completeness. Without loss of generality we consider the case $i=1$, since the treatment of the case $i=2$ is analogous. Defining $\tilde{\tau}:=\inf\{t>0: X_{t}^{b^1_2,\tilde{\varphi}_1,\tilde{\varphi}_2}\leq b^1_1\}$, and exploiting the time-homogeneity of the production process $X$ and the independence of the Brownian increments, we can write for any $k\geq 1$
	\begin{align*}
	\mathbb{E}_x\big[e^{-r_1\tilde{\tau}^{\tilde{\varphi}_1,\tilde{\varphi}_2}_{1,k}}\big]=\mathbb{E}_x\big[e^{-r_1\tilde{\tau}^{\tilde{\varphi}_1,\tilde{\varphi}_2}_{1,k-1}}\big]\mathbb{E}\big[e^{-r_1\tilde{\tau}}\big]. 
	\end{align*}
	By iterating the previous argument one finds $\mathbb{E}_x\big[e^{-r_1\tilde{\tau}^{\tilde{\varphi}_1,\tilde{\varphi}_2}_{1,k}}\big]=\mathbb{E}_x\big[e^{-r_1\tilde{\tau}^{\tilde{\varphi}_1,\tilde{\varphi}_2}_{1,1}}\big]\big(\mathbb{E}\big[e^{-r_1\tilde{\tau}}\big]\big)^{k-1}$. Then summing over $k$ on both sides of the previous equation and applying Fubini-Tonelli's theorem, we obtain
	\begin{align*}
	\mathbb{E}_x\bigg[\sum_{k\geq 1}e^{-r_1\tilde{\tau}^{\tilde{\varphi}_1,\tilde{\varphi}_2}_{1,k}}\bigg]=\mathbb{E}_x\big[e^{-r_1\tilde{\tau}^{\tilde{\varphi}_1,\tilde{\varphi}_2}_{1,1}}\big]\sum_{k\geq 0}\bigg(\mathbb{E}\big[e^{-r_1\tilde{\tau}}\big]\bigg)^k,
	\end{align*}
	and the series on the right-hand-side above converges as $\mathbb{E}[e^{-r_1\tilde{\tau}}]<1$.
	
	Finally, because $b^1_1<b^2_2$ by assumption, and $b^1_2,b^2_1\in(b^1_1,b^2_2)$, condition $(iii)$ and $(iv)$ of Definition \ref{admissible} are satisfied.
\end{proof}

The expected payoffs associated to the admissible policies $(\tilde{\varphi}_1,\tilde{\varphi}_2)$ are defined as $$v_1(x):=\mathcal{J}_1(x,\tilde{\varphi}_1,\tilde{\varphi}_2)\quad \text{and}\quad v_2(x):=\mathcal{J}_2(x,\tilde{\varphi}_1,\tilde{\varphi}_2),\quad x>0.$$
 Moreover, thanks to Assumption \ref{Ass1}, the performance criteria associated with no interventions are finite and given by
\begin{align}\label{G1G2}
 G_1(x):=\mathbb{E}_x\bigg[\int_{0}^{\infty}e^{-r_1s}\pi(X_s)ds\bigg]\quad \text{and}\quad G_2(x):=\mathbb{E}_x\bigg[\int_{0}^{\infty}e^{-r_2s}C(\beta X_s)ds\bigg].
\end{align}

For frequent future use, we define the infinitesimal generator $\LL_X$ of the uncontrolled diffusion $X^x$ by
$$\big(\LL_X u\big)(x):=\frac{1}{2}\sigma^2(x)u''(x) + \mu(x)u'(x), \quad \,\,x>0,$$
for any $u \in C^2((0,\infty))$. Then, for fixed $r>0$, under Assumption \ref{ass:D2} there always exist two linearly independent, strictly positive solutions to the ordinary differential equation $\LL_X u = r u$ satisfying a set of boundary conditions based on the boundary behaviour of $X^x$ (see, e.g., pp.\ 18--19 of Borodin and Salminen (2002)). These functions span the set of solutions of $\LL_X u = r u$, and are uniquely defined up to multiplication if one of them is required to be strictly increasing and the other one to be strictly decreasing. We denote the strictly increasing solution by $\psi_r$ and the strictly decreasing one by $\phi_r$. From now on we set $\psi_i:=\psi_{r_i}$ and $\phi_i:=\phi_{r_i}$ for $i=1,2$.

\begin{remark}
	The functions $G_1$ and $G_2$ are the expected cumulative present value of the flows $\pi(X^x_t)$ and $C(\beta X^x_t)$, respectively. It is well known that $G_i$, $i=1,2$, can be represented in terms of the fundamental solutions $\psi_{i}$ and $\phi_{i}$, $i=1,2$. We refer the reader to equation (3.3) in Alvarez (2004), among others.
\end{remark}

For any $i=1,2$ we introduce the strictly decreasing and positive function $F_i$ such that $F_i(x):=\phi_i(x)/\psi_i(x)$. Also, for given $b^i_j,\, i,j=1,2$, such that $0<b^{1}_1<b^{1}_2<b^{2}_2$ and $b^{1}_1<b^{2}_1<b^{2}_2$, we set
\begin{align}\label{AB}
A_i(x):=\frac{\psi_i(x)}{\psi_i(b^1_1)}\Bigg[\frac{F_i(b^2_2)-F_i(x)}{F_i(b^2_2)-F_i(b^1_1)}\Bigg],\quad
B_i(x):=\frac{\psi_i(x)}{\psi_i(b^2_2)}\Bigg[\frac{F_i(x)-F_i(b^1_1)}{F_i(b^2_2)-F_i(b^1_1)}\Bigg] \quad i=1,2.
\end{align}

We define $w_i$ as the restriction of $v_i$ on $\mathcal{I}_1\cap\mathcal{I}_2$, i.e.\ $w_i:=v_i|_{\mathcal{I}_1\cap\mathcal{I}_2}$. The next result provides a representation of $v_i(x)=\mathcal{J}_i(x,\tilde{\varphi}_1,\tilde{\varphi}_2)$, $i=1,2$. 
\begin{proposition}
\label{prop1}
Recall \eqref{AB}, let $x>0$, and $b^i_j,\, i,j=1,2$, such that $0<b^{1}_1<b^{1}_2<b^{2}_2$ and $b^{1}_1<b^{2}_1<b^{2}_2$.
Then, the performance criteria $v_1(x)$ and $v_2(x)$ associated to the policies $(\tilde{\varphi}_1,\tilde{\varphi}_2)\in\mathcal{S}(x)$ can be represented as
\begin{align}\label{w1}
v_1(x)=
\begin{cases}
w_1(b^{1}_2)-K_1-\frac{\kappa_1}{\alpha}(b^{1}_2-x),\vspace{1.5mm}\quad &x\leq b^{1}_1,\\
\big[w_1(b^{1}_2)-K_1-\frac{\kappa_1}{\alpha}(b^1_2-b^1_1)-G_1(b^1_1)\big]A_1(x)\vspace{1.5mm}\\
+\big[w_1(b^2_1)-G_1(b^2_2)\big]B_1(x)+G_1(x),\vspace{1.5mm}\quad &x\in(b^1_1,b^2_2),\\
w_1(b^{2}_1), &x\geq b^{2}_2,
\end{cases}
\end{align}
and 
\begin{align}\label{w2}
v_2(x)=
\begin{cases}
w_2(b^{1}_2),\vspace{1.5mm}\quad &x\leq b^{1}_1\\
\big[w_2(b^2_1)+K_2+\kappa_2(b^2_2-b^2_1)-G_2(b^2_2)\big]B_2(x)\vspace{1.5mm}\\
+\big[w_2(b^1_2)-G_2(b^1_1)\big]A_2(x)+G_2(x),\vspace{1.5mm} &x\in(b^{1}_1,b^{2}_2),\\
w_2(b^{2}_1)+K_2+\kappa_2(x-b^{2}_1), &x\geq b^{2}_2.
\end{cases}
\end{align}
Moreover, under the requirement
\begin{align}
\label{cond1}
\big(1-A_i(b^1_2)\big)\big(1-B_i(b^2_1)\big)-B_i(b^1_2)A_i(b^2_1)\neq 0,\quad i=1,2,
\end{align}
one has 
\begin{align}
\label{cond2}
\begin{split}
w_1(b^1_2)=&\bigg[1-A_1(b^1_2)-\frac{B_1(b^1_2)A_1(b^2_1)}{1-B_1(b^2_1)}\bigg]^{-1}\bigg[\frac{G_1(b^2_1)B_1(b^1_2)}{1-B_1(b^2_1)}+G_1(b^1_2)\\
&-\bigg(K_1+\kappa_1(b^1_2-b^1_1)+G_1(b^1_1)\bigg)\bigg(\frac{A_1(b^2_1)B_1(b^1_2)}{1-B_1(b^2_1)}+A_1(b^1_2)\bigg)\\
&-G_1(b^2_2)\bigg(\frac{B_1(b^2_1)B_1(b^1_2)}{1-B_1(b^2_1)}+B_1(b^1_2)\bigg)\bigg],
\end{split}
\end{align}
\begin{align}\label{cond3}
\begin{split}
w_1(b^2_1)=&\big[1-B_1(b^2_1)\big]^{-1}\big[\big(w_1(b^1_2)-K_1-\kappa_1(b^1_2-b^1_1)-G_1(b^1_1)\big)A_1(b^2_1)\\
&-G_1(b^2_2)B(b^2_1)+G_1(b^2_1)\big],
\end{split}
\end{align}
and
\begin{align}
\label{cond4}
\begin{split}
w_2(b^1_2)=&\bigg[\frac{\big(1-A_2(b^1_2)\big)\big(1-B_2(b^2_1)\big)}{B_2(b^1_2)}-A_2(b^2_1)\bigg]^{-1}\times\\
&\bigg[\frac{G_2(b^1_2)\big(1-B_2(b^2_1)\big)}{B_2(b^1_2)}+G_2(b^2_1)+K_2+\kappa_2(b^2_2-b^2_1)-G_2(b^2_2)\\
&-G_2(b^1_1)\bigg(A_2(b^1_2)\frac{1-B_2(b^2_1)}{B_2(b^1_2)}+A_2(b^2_1)\bigg)\bigg],
\end{split}
\end{align}
\begin{align}
\label{cond5}
\begin{split}
w_2(b^2_1)=&\big[1-B_2(b^2_1)\big]^{-1}\big[\big(K_2+\kappa_2(b^2_2-b^2_1)-G_2(b^2_2)\big)B_2(b^2_1)\\
&+\big(w_2(b^1_2)-G_2(b^1_1)\big)A_2(b^2_1)+G_2(b^2_1)\big].
\end{split}
\end{align}
\end{proposition}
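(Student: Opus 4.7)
The plan is to treat the three cases of \eqref{w1}--\eqref{w2} separately and then close the system by evaluating the middle-region formula at $x=b^1_2$ and $x=b^2_1$. In the two action regions the expressions follow directly from the definition of the policies $(\tilde\varphi_1,\tilde\varphi_2)$ together with Lemma \ref{lemma1}. If $x\le b^1_1$, the policy $\tilde\varphi_1$ fires at $t=0$ with $\tilde\xi_1=(b^1_2-x)/\alpha$, paying $g_1(\tilde\xi_1)=K_1+\kappa_1(b^1_2-x)/\alpha$, while $\tilde\varphi_2$ is inactive at $t=0$; the post-impulse state is $b^1_2\in(b^1_1,b^2_2)$, from which the remaining payoffs are $w_1(b^1_2)$ and $w_2(b^1_2)$ by definition of $w_i$. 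This reads off the first lines of \eqref{w1} and \eqref{w2}, and the case $x\ge b^2_2$ is symmetric with the government acting first and the firm remaining inactive.

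For $x\in(b^1_1,b^2_2)$, introduce the first exit time $\tau:=\inf\{t\ge 0:X^x_t\notin(b^1_1,b^2_2)\}$. On $[0,\tau)$ neither agent intervenes, so $X^x$ coincides there with the uncontrolled diffusion. Applying the strong Markov property both to the controlled problem and to the non-intervention benchmark $G_i$ from \eqref{G1G2} yields
\begin{align*}
v_i(x)=G_i(x)-\mathbb{E}_x\big[e^{-r_i\tau}G_i(X_\tau)\big]+\mathbb{E}_x\big[e^{-r_i\tau}v_i(X_\tau)\big],\qquad i=1,2,
\end{align*}
where the boundary values of $v_i$ at $\{b^1_1,b^2_2\}$ are those derived in the first step (in particular $v_2(b^2_2)=w_2(b^2_1)+K_2+\kappa_2(b^2_2-b^2_1)$ absorbs the government's immediate impulse on $\{X_\tau=b^2_2\}$). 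By the classical hitting-time theory for one-dimensional diffusions (see, e.g., Borodin and Salminen (2002), pp.~18--19), the Laplace transforms $\mathbb{E}_x[e^{-r_i\tau}\mathbbm{1}_{\{X_\tau=b^1_1\}}]$ and $\mathbb{E}_x[e^{-r_i\tau}\mathbbm{1}_{\{X_\tau=b^2_2\}}]$ are the unique linear combinations of $\psi_i,\phi_i$ vanishing at the opposite boundary and equal to $1$ at the target boundary; rewriting them via $F_i=\phi_i/\psi_i$ identifies them with $A_i(x)$ and $B_i(x)$ in \eqref{AB}. Substituting and collecting terms produces the middle cases of \eqref{w1} and \eqref{w2}.

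Finally, the constants $w_i(b^1_2)$ and $w_i(b^2_1)$ are determined by evaluating the middle-case formula at $x=b^1_2$ and $x=b^2_1$, both of which lie in $(b^1_1,b^2_2)$. For $i=1$ this produces a $2\times 2$ linear system in $(w_1(b^1_2),w_1(b^2_1))$ whose coefficient determinant is precisely $(1-A_1(b^1_2))(1-B_1(b^2_1))-B_1(b^1_2)A_1(b^2_1)$, the non-degeneracy condition \eqref{cond1}. Under \eqref{cond1}, elimination (first isolating $w_1(b^2_1)$ from the equation at $x=b^2_1$, then substituting into the equation at $x=b^1_2$) delivers \eqref{cond2}--\eqref{cond3}; the analogous calculation for $v_2$, with the sign-flipped cost adjustment at $b^2_2$, yields \eqref{cond4}--\eqref{cond5}.

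The main subtlety, in my view, is the clean application of the strong Markov property in the middle region: one must verify that after the exit time $\tau$ the continuation policy of each agent coincides with a fresh copy of $\tilde\varphi_i$ started at $X_\tau$. This relies on the time-homogeneity of the uncontrolled diffusion \eqref{X1} and on the fact that the triggers $b^i_j$ are deterministic constants, independent of the sample path. Beyond this, the argument is a routine combination of standard one-dimensional hitting-time formulas with elementary linear algebra.
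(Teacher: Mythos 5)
Your proposal is correct and follows essentially the same route as the paper's proof: decompose at the first exit time of the joint inaction region, use the strong Markov property together with the representation of $G_i$ to reduce the middle case to the two-sided Laplace transforms $A_i$, $B_i$, read off the action-region cases from the immediate impulse (with the government's priority at $b^2_2$), and close the system by evaluating at $b^1_2$ and $b^2_1$ under the non-degeneracy condition \eqref{cond1}. The only cosmetic difference is that you write the middle-region identity as $v_i(x)=G_i(x)-\mathbb{E}_x[e^{-r_i\tau}G_i(X_\tau)]+\mathbb{E}_x[e^{-r_i\tau}v_i(X_\tau)]$ before expanding, which is exactly the grouping the paper uses implicitly.
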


\begin{proof}
We consider only the case $i=1$ since the arguments are symmetric for $i=2$. Let $x>0$ be given and fixed, and define $\tau_1:=\inf\{t\geq 0:X^x_t\leq b^{1}_1\}$ and $\tau_2:=\inf\{t\geq 0:X^x_t\geq b^{2}_2\}$. According to the policies $(\tilde{\varphi}_1,\tilde{\varphi}_2)$, the stopping time $\tau_1 \wedge \tau_2$ is the first time at which either the firm or the government intervenes. Then, noticing that $X$ is uncontrolled up to time $\tau_1 \wedge \tau_2$, the payoff of the firm associated to $(\tilde{\varphi}_1,\tilde{\varphi}_2)$ satisfies the functional relation 
\begin{align}\label{P1}
\begin{split}
 v_1(x)=\mathbb{E}_x\bigg[&\int\limits_{0}^{\tau_{1}\wedge\tau_{2}}e^{-r_1t}\pi(X_t)dt +e^{-r_1\tau_1}\mathds{1}_{\{\tau_1<\tau_2\}}\Big(v_1(b^{1}_2)-K_1-\frac{\kappa_1}{\alpha}(b^{1}_2-X^{\tilde{\varphi}_1,\tilde{\varphi}_2}_{\tau_1})\Big)\\
 &+e^{-r_1\tau_2}\mathds{1}_{\{\tau_1 > \tau_2\}}v_1(b^{2}_1)\bigg].
\end{split}
\end{align}
Recall that $w_i$ denotes the restriction of $v_i$ on $\mathcal{I}_1\cap\mathcal{I}_2$. Then, taking $x\in(b^1_1,b^2_2)=\mathcal{I}_1\cap\mathcal{I}_2$ in \eqref{P1}, noticing that $b^1_2$ and $b^2_1$ belong to $\mathcal{I}_1\cap\mathcal{I}_2$ and recalling \eqref{G1G2}, by the strong Markov property we can write
\begin{align*}
w_1(x)=
&\big(w_1(b^{1}_2)-K_1-\frac{\kappa_1}{\alpha}(b^1_2-b^1_1)-G_1(b^1_1)\big)\mathbb{E}_x\big[e^{-r_1\tau_1}\mathds{1}_{\{\tau_1<\tau_2\}}\big]\\[1.5mm]
&+\big(w_1(b^2_1)-G_1(b^2_2)\big)\mathbb{E}_x\big[e^{-r_1\tau_2}\mathds{1}_{\{\tau_1 > \tau_2\}}\big]+G_1(x).
\end{align*}
By using now the formulas for the Laplace transforms of hitting times of a linear diffusion (see, e.g., Dayanik and Karatzas (2003), eq.\ (4.3)), we find (cf. \eqref{AB})
\begin{align*}
\mathbb{E}_x\big[e^{-r_1\tau_1}\mathds{1}_{\{\tau_1<\tau_2\}}\big]=A_1(x),\quad
\mathbb{E}_x\big[e^{-r_1\tau_2}\mathds{1}_{\{\tau_1 > \tau_2\}}\big]=B_1(x),
\end{align*}
so that 
$$w_1(x)=\big(w_1(b^{1}_2)-K_1-\frac{\kappa_1}{\alpha}(b^1_2-b^1_1)-G_1(b^1_1)\big)A_1(x)+\big(w_1(b^2_1)-G_1(b^2_2)\big)B_1(x)+G_1(x),$$
for all $x \in (b^1_1,b^2_2)$.

Taking $x\leq b^1_1$ in \eqref{P1} we obtain $\tau_1=0$ and then $v_1(x)=w_1(b^{1}_2)-K_1-\frac{\kappa_1}{\alpha}(b^{1}_2-x)$, while picking $x\geq b^2_2$ we have $\tau_2=0$ and thus $v_1(x)=w_1(b^2_1)$. Therefore we can write
\begin{align}\label{repv1}
v_1(x)=
\begin{cases}
w_1(b^{1}_2)-K_1-\frac{\kappa_1}{\alpha}(b^{1}_2-x),\vspace{1.5mm}\quad &x\leq b^{1}_1,\\
\big[w_1(b^{1}_2)-K_1-\frac{\kappa_1}{\alpha}(b^1_2-b^1_1)-G_1(b^1_1)\big]A_1(x)\vspace{1.5mm}\\
+\big[w_1(b^2_1)-G_1(b^2_2)\big]B_1(x)+G_1(x),\vspace{1.5mm}\quad &x\in(b^1_1,b^2_2),\\
w_1(b^{2}_1), &x\geq b^{2}_2.
\end{cases}
\end{align}

Recall \eqref{cond1} and that $b^1_2,b^2_1\in(b^1_1,b^2_2)$ by construction. Then, taking first $x=b^1_2$ and then $x=b^2_1$ in \eqref{repv1}, we obtain a linear system of two equations for the two unknowns $w_1(b^1_2)$ and $w_1(b^2_1)$. Once solved, this system yields
\begin{align*}
w_1(b^1_2)=&\bigg[1-A_1(b^1_2)-\frac{B_1(b^1_2)A_1(b^2_1)}{1-B_1(b^2_1)}\bigg]^{-1}\bigg[\frac{G_1(b^2_1)B_1(b^1_2)}{1-B_1(b^2_1)}+G_1(b^1_2)\\
&-\bigg(K_1+\kappa_1(b^1_2-b^1_1)+G_1(b^1_1)\bigg)\bigg(\frac{A_1(b^2_1)B_1(b^1_2)}{1-B_1(b^2_1)}+A_1(b^1_2)\bigg)\\
&-G_1(b^2_2)\bigg(\frac{B_1(b^2_1)B_1(b^1_2)}{1-B_1(b^2_1)}+B_1(b^1_2)\bigg)\bigg],
\end{align*}
and 
\begin{align*}
w_1(b^2_1)=&\big[1-B_1(b^2_1)\big]^{-1}\big[\big(w_1(b^1_2)-K_1-\kappa_1(b^1_2-b^1_1)-G_1(b^1_1)\big)A_1(b^2_1)\\
&-G_1(b^2_2)B(b^2_1)+G_1(b^2_1)\big].
\end{align*}
Notice that the denominators in the definition of $w_1(b^1_2)$ are nonzero. Indeed, $B_1(b^2_1) \neq 1$ since $\tau_2 > 0$ $\mathbb{P}$-a.s.\ for $x=b^2_1 < b^2_2$, and $(1-A_1(b^1_2))(1-B_1(b^2_1))-B_1(b^1_2)A_1(b^2_1)\neq 0$ by \eqref{cond1}.

The proof is then completed.
\end{proof}

It is easy to see from \eqref{w1} and \eqref{w2} that $v_i,\, i=1,2,$ is by construction a continuous function on $(0,\infty)$. In order to obtain the four boundaries $b^i_j$, $i,j=1,2$, we first assume that each agent picks her own action boundary $b^i_i,\, i=1,2,$ such that $v_i$ is also continuously differentiable there. This gives
\begin{align}\label{eq1}
v_1^\prime(b^1_{1}\,+)&=\frac{\kappa_1}{\alpha},\\\label{eq2}
v_2^\prime(b^2_{2}\,-)&=\kappa_2,
\end{align}
where we have set $v_i^\prime(\cdot \,\pm\,):=\lim_{\varepsilon \downarrow 0}v_i^\prime(\,\cdot \,\pm \varepsilon)$.

The two equations \eqref{eq1} and \eqref{eq2} may be interpreted as the so-called \emph{smooth-fit} equations, well known optimality conditions in the literature on singular/impulse control and optimal stopping (see, e.g., Fleming and Soner (2005) and Peskir and Shiryaev (2006)).
Furthermore, we assume that at each intervention the firm and the government shift the process $X$ to the points that give rise to the maximal net profits and minimal total costs, respectively. This means that  $b_2^1,b_1^2\in(b^1_1,b^2_2)$ are selected such that
\begin{align*}
b^1_2={\arg\sup}_{y\geq b^1_1}\big\{v_1(y)-\frac{\kappa_1}{\alpha}(y-x)-K_1\big\},\quad x\leq b^1_1,
\end{align*}
and 
\begin{align*}
b^2_1={\arg\inf}_{y\leq b^2_2}\big\{v_2(y)+\kappa_2(x-y)+K_2\big\},\quad x\geq b^2_2.
\end{align*}
Consequently, 
	\begin{align}
	\label{eq3}
	v_1^\prime(b^1_2)&=\frac{\kappa_1}{\alpha},\\\label{eq4}
	v_2^\prime(b^2_1)&=\kappa_2.
	\end{align}
	
The \emph{four} equations \eqref{eq1}-\eqref{eq4} can be used in order to obtain the \emph{four} unknowns $b^1_1, b^1_2, b^2_1, b^2_2$, whenever a solution to such a highly nonlinear system exists.

%%%%%%%%%%%%%%%%%%%%%%%%%%%%%%%%%%%%%%%%%%%%%%%%%%%%%%%%%%%%%%%%%%%%%%%%%%%%%%%%%%%%%%%%%%%%%%%%%%%%%%%%%%%%%%%%%%%%%%%%%%%%%%%%%%%%%%%%%%%%%%%%
\subsection{The Verification Theorem}
\label{sec:verification}

Here we prove a verification theorem providing a set of sufficient conditions under which the solution to \eqref{eq1}-\eqref{eq4} (if it exists) characterizes an equilibrium; that is, $(\tilde{\varphi}_1,\tilde{\varphi}_2)=(\varphi^\ast_1,\varphi^\ast_2)$, and $v_1\equiv V_1$, $v_2\equiv V_2$ (cf. Definition \ref{problem}). For its proof the following assumption is needed.

\begin{Assumptions}
\label{ass:theta}
\begin{itemize}\hspace{10cm}
\item[(i)]  There exists $\hat{x}_1>0$ such that the function $\theta_1:\mathbb{R}_+\mapsto\mathbb{R}$ with $\theta_1(x):= \pi(x) + \frac{\kappa_1}{\alpha}(\mu(x) - r_1 x)$ attains a local maximum at $\hat{x}_1$ and is increasing on $(0,\hat{x}_1)$;
\item[(ii)] There exists $\hat{x}_2>0$ such that the function $\theta_2:\mathbb{R}_+\mapsto\mathbb{R}$ with $\theta_2(x):= C(\beta x) + \kappa_2(\mu(x) - r_2 x)$ attains a local minimum at $\hat{x}_2$ and is increasing on $(\hat{x}_2,\infty)$.
\end{itemize}
\end{Assumptions}

\begin{remark}
It is worth noticing that Assumption \ref{ass:theta} is verified by the benchmark cases $\mu(x)=\mu x$, $\mu\in\mathbb{R}$, $\pi(x)=x^a$, $a\in(0,1)$, and $C(x)=x^b$, $b>1$, with $\hat{x}_1=\big[\frac{\kappa_1}{a\alpha}(r_1-\mu)\big]^{\frac{1}{a-1}}$, $\hat{x}_2=\big[\frac{\kappa_2}{b\beta^b}(r_2-\mu)\big]^{\frac{1}{b-1}}$ (whenever $r_1 \wedge r_2 > \mu$).
\end{remark}

\begin{theorem} [Verification Theorem]
\label{Verification}
Let Assumption \ref{ass:theta} hold. Let $b^{i}_j$, $i,j=1,2$, be a solution of \eqref{eq1}-\eqref{eq4} such that $0<b^{1}_1<b^{1}_2<b^{2}_2$, $b^{1}_1<b^{2}_1<b^{2}_2$ and satisfying \eqref{cond1}, recall $v_1$, $v_2$ as in \eqref{w1} and \eqref{w2}, and suppose that
	\begin{align}\label{Cond6}
	v_1^\prime(x)&\geq \frac{\kappa_1}{\alpha}, &\text{for all}\,\, x\in(b^1_1,b^1_2],\\
	\label{Cond3}
	v_1^\prime(x)&<\frac{\kappa_1}{\alpha}, &\text{for all}\,\, x\in(b^1_2,b^2_2],\\
	\label{Cond4}
	v_2^\prime(x)&<\kappa_2,&\text{for all}\,\, x\in(b^1_1,b^2_1),\\\label{Cond5}
	v_2^\prime(x)&\geq \kappa_2, &\text{for all}\,\, x\in[b^2_1,b^2_2),
	\end{align}
	and
	\begin{align}
	\label{hx1}
	b^1_1&\leq\hat{x}_1,\\ \label{hx1bis} \pi(b^1_1)+\frac{c_1}{\alpha}\mu(b^1_1) - r_1v_1(b^1_1)&\leq 0,\\\label{hx2}
	b^2_2&\geq\hat{x}_2,\\ \label{hx2bis} C(\beta b^2_2)+\kappa_2\mu(b^2_2) - r_2v_2(b_2^2)&\geq 0.
	\end{align}
	Then, for $x>0$, the policies $(\tilde{\varphi}_1,\tilde{\varphi}_2)\in\mathcal{S}(x)$ such that
	 \begin{align}
	 \,
	 \begin{cases}
	 \displaystyle \tilde{\tau}^{\tilde{\varphi}_1,\tilde{\varphi}_2}_{i,k}=\inf\{t>\tilde{\tau}_{i,k-1}: X_t^{\tilde{\varphi}_1,\tilde{\varphi}_2}\notin \mathcal{I}_i\},\quad &k\geq 1,\,\,\mathbb{P}_x-a.s.,\\
	 \displaystyle \tau_{i,0}^{\tilde{\varphi}_1,\tilde{\varphi}_2}=0, &\text{$\mathbb{P}_x$-a.s.,}
	 \end{cases}  
	 \end{align}	
	for $i=1,2$, and
	\begin{equation}
	\label{impulses}
	\tilde{\xi}_k=\frac{1}{\alpha}\Big(b^{1}_2-X_{\tilde{\tau}^{\tilde{\varphi}_1,\tilde{\varphi}_2}_{1,k}-}^{\tilde{\varphi}_1,\tilde{\varphi}_2}\Big), \qquad \tilde{\eta}_k =X_{\tilde{\tau}^{\tilde{\varphi}_1,\tilde{\varphi}_2}_{2,k}-}^{\tilde{\varphi}_1,\tilde{\varphi}_2}-b^{2}_1, \qquad k\geq 1, \qquad \text{$\mathbb{P}_x$-a.s.,}
	\end{equation}
	form an equilibrium, and $v_1$ and $v_2$ are the corresponding equilibrium values; that is,
	\begin{align*}
	v_1 = V_1,\quad v_2 = V_2 \quad \text{on}\,\,\,(0,\infty).
	\end{align*}
\end{theorem}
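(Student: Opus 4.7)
The plan is a standard verification argument for a nonzero-sum impulse game: show that each agent cannot strictly improve by a unilateral deviation from $\tilde{\varphi}_i$. Concretely, fixing the government at $\tilde{\varphi}_2$, I would prove $\mathcal{J}_1(x,\varphi_1,\tilde{\varphi}_2)\leq v_1(x)$ for every $\varphi_1$ with $(\varphi_1,\tilde{\varphi}_2)\in\mathcal{S}(x)$, and symmetrically $\mathcal{J}_2(x,\tilde{\varphi}_1,\varphi_2)\geq v_2(x)$ for every $\varphi_2$ with $(\tilde{\varphi}_1,\varphi_2)\in\mathcal{S}(x)$; equality when $\varphi_i=\tilde{\varphi}_i$ is already secured by Proposition \ref{prop1}. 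I describe only the firm's side, the government's being symmetric.

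The preparatory step is to establish three QVI-type conditions for $v_1$ globally: (a) the HJB inequality $(\mathcal{L}_X-r_1)v_1(x)+\pi(x)\leq 0$ on $(0,b^2_2)$, with equality on the joint inaction region; (b) the firm's impulse inequality $v_1(x+\alpha\xi)-v_1(x)\leq g_1(\xi)$ for all $x>0$, $\xi\geq 0$; (c) the identity $v_1(x)=v_1(b^2_1)$ for $x\geq b^2_2$. Condition (a) holds with equality on $(b^1_1,b^2_2)$ by the construction in Proposition \ref{prop1}, since $A_1, B_1$ lie in the kernel of $\mathcal{L}_X-r_1$ and $(\mathcal{L}_X-r_1)G_1+\pi=0$; on $(0,b^1_1]$, where $v_1$ is affine of slope $\kappa_1/\alpha$, the left-hand side reduces to $\theta_1(x)+c$ for an $x$-independent constant $c$, which is non-positive at $x=b^1_1$ by \eqref{hx1bis} and then extends to the whole interval by \eqref{hx1} and Assumption \ref{ass:theta}(i). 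Condition (b) is equivalent to showing that $y\mapsto v_1(y)-(\kappa_1/\alpha)y$ attains its global maximum at $y=b^1_2$, with value exactly $K_1$ above its level on $(0,b^1_1]$; the sign conditions \eqref{Cond6}-\eqref{Cond3} together with the smooth-fit equalities \eqref{eq1}, \eqref{eq3} and the affine/constant form of $v_1$ outside $(b^1_1,b^2_2)$ pin this down. Condition (c) is immediate from \eqref{w1}.

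The core step applies the generalized It\^o formula to $e^{-r_1 t}v_1(X^{x,\varphi_1,\tilde{\varphi}_2}_t)$; since $v_1\in C^1(0,\infty)$ and is piecewise $C^2$ with bounded second derivative (the smooth-fit conditions \eqref{eq1}, \eqref{eq2} remove any local-time contributions at $b^1_1$ and $b^2_2$), the classical formula applies on each smooth piece. Between jump times (a) bounds the drift integrand by $-e^{-r_1 s}\pi(X_s)$; at each firm-intervention time (b) bounds the jump in $v_1$ by $g_1(\xi_k)$; at each government-intervention time (c) forces that jump to vanish; and the priority factor $\prod_l\mathds{1}_{\{\tau_{1,k}\neq\tau_{2,l}\}}$ in \eqref{controlledX} ensures coincident jumps are counted correctly. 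After localisation by a sequence $T_n\uparrow\infty$, rearrangement and taking $\mathbb{E}_x$ give $v_1(x)\geq \mathbb{E}_x\big[\int_0^{T_n}e^{-r_1 s}\pi(X_s)\,ds - \sum_{\tau_{1,k}\leq T_n}e^{-r_1\tau_{1,k}}g_1(\xi_k)\mathds{1}_{\{\tau_{1,k}<\infty\}} + e^{-r_1 T_n}v_1(X_{T_n})\big]$. Passing to the limit via monotone/dominated convergence using Definition \ref{admissible}(ii), together with the boundedness of $v_1$ on $[b^1_1,b^2_2]$ and its explicit affine/constant form elsewhere (which drives $e^{-r_1 T_n}\mathbb{E}_x[v_1(X_{T_n})]\to 0$ thanks to the discount factor and Assumption \ref{Ass1}), yields $\mathcal{J}_1(x,\varphi_1,\tilde{\varphi}_2)\leq v_1(x)$. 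The analogous reasoning, based on Assumption \ref{ass:theta}(ii), \eqref{Cond4}-\eqref{Cond5} and \eqref{hx2}-\eqref{hx2bis}, handles the government's inequality.

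The main obstacle I anticipate is the global verification of the firm's intervention inequality (b), because \eqref{Cond6}-\eqref{Cond3} constrain $v_1'$ only on the joint inaction region: one must patch the affine piece on $(0,b^1_1)$ and the constant piece on $(b^2_2,\infty)$ with this interior behaviour to conclude that $\sup_{y>0}\{v_1(y)-(\kappa_1/\alpha)y\}$ is attained precisely at $b^1_2$ with gap exactly $K_1$ above the constant level of $v_1-(\kappa_1/\alpha)\mathrm{id}$ on $(0,b^1_1]$. A secondary but delicate point is the bookkeeping of simultaneous jumps in the It\^o decomposition: the priority convention built into \eqref{controlledX} makes everything consistent, but each coincidence must be traced so that no firm-cost term $g_1(\xi_k)$ is spuriously subtracted when the firm's would-be jump is actually cancelled by a government's jump.
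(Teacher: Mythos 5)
Your proposal is correct and follows essentially the same route as the paper: establish the QVI-type conditions (the HJB inequality on $(0,b^2_2)$ via $\theta_1$, \eqref{hx1} and \eqref{hx1bis}; the intervention inequality $v_1\geq\mathcal{M}_1v_1$ via \eqref{Cond6}--\eqref{Cond3}; constancy of $v_1$ on $[b^2_2,\infty)$), then apply the generalized It\^o formula with localization and pass to the limit. The paper handles your two anticipated obstacles exactly as you sketch — the global impulse inequality by checking it directly on $(0,b^1_1]\cup(b^1_2,\infty)$ and using monotonicity of $v_1-\mathcal{M}_1v_1$ on $(b^1_1,b^1_2]$, and the government's jumps by noting $v_1$ does not jump there — so no further comment is needed.
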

\begin{proof}
	The proof is organized in two steps. \vspace{0.25 cm}
	
		\emph{Step 1.}	Here we discuss the regularity properties of the function $v_i$, $i=1,2$, constructed in Proposition \ref{prop1}. Note that by \eqref{w1} and \eqref{w2} one can directly check that $v_i\in C((0,\infty))$ for $i=1,2$. Moreover, by \eqref{eq1} and \eqref{eq2} one has $v_1\in C^1((0,b^2_2))$, $v_2\in C^1((b^1_1,\infty))$ and it can be checked by direct calculations that $v_1^{\prime\prime}\in L^\infty_{\text{loc}}((0,b^2_2))$ and $v_2^{\prime\prime}\in L^\infty_{\text{loc}}((b^1_1,\infty))$.		
		Also, for any $x\in(b^1_1,b^2_2)$ we have from \eqref{w1} and \eqref{w2} that $\big(\mathcal{L}_Xv_1-r_1v_1\big)(x)+\pi(x)=0$ and $\big(\mathcal{L}_Xv_2-r_2v_2\big)(x)+C(\beta x)=0$.
		
		Because $\theta_1$ is increasing on $(0,\hat{x}_1)$ (cf.\ Assumption \ref{ass:theta}), and $b^1_1\leq\hat{x}_1$ by assumption, we obtain from \eqref{w1} that for any $x < b^1_1$ one has
		\begin{align}\label{L1}
		\begin{split}
		&\big(\mathcal{L}_Xv_1-r_1v_1\big)(x)+\pi(x)=\theta_1(x)-r_1\big(v_1(b^1_2)-K_1-\frac{\kappa_1}{\alpha} b^1_2\big)\\
		&\leq \theta_1(b^1_1)-r_1\big(v_1(b^1_2)-K_1-\frac{\kappa_1}{\alpha} b^1_2\big)=\pi(b^1_1)+\frac{\kappa_1}{\alpha}\mu(b^1_1)-r_1 v_1(b^1_1)\leq 0,
		\end{split}
		\end{align}
		where we have used that $v_1(b^1_2)=v_1(b^1_1)+K_1+\frac{\kappa_1}{\alpha}(b^1_2-b^1_1)$, \eqref{hx1} and \eqref{hx1bis}.
		
		 Similarly, one can check that $\big(\mathcal{L}_Xv_2-r_2v_2\big)(x)+C(\beta x)\geq0$  for all $x>b_2^{2}$ due to \eqref{hx2}, \eqref{hx2bis}, and the fact that $\theta_2$ is increasing on $(\hat{x}_2,\infty)$ (cf.\ Assumption \ref{ass:theta}).
		\vspace{0.25cm}
		
		\emph{Step 2.} Given $x>0$ we now prove that $(\tilde{\varphi}_1,\tilde{\varphi}_2)\in\mathcal{S}(x)$ are equilibrium policies;  that is,
		\begin{align*}
		v_1(x)\geq\mathcal{J}_1(x,\varphi_1,\tilde{\varphi}_2),\quad \forall \varphi_1 \text{ s.t. }(\varphi_1,\tilde{\varphi}_2)\in\mathcal{S}(x),\\ v_2(x)\leq \mathcal{J}_2(x,\tilde{\varphi}_1,\varphi_2),\quad \forall \varphi_2 \text{ s.t. }(\tilde{\varphi}_1,\varphi_2)\in\mathcal{S}(x),
		\end{align*}
		with equalities when we pick $\varphi_1=\tilde{\varphi}_1$ and $\varphi_2=\tilde{\varphi}_2$.
Without loss of generality we consider $i=1$, since the arguments for $i=2$ are analogous.

Let $\varphi_1=(\tau_{1,1},\dots,\tau_{1,n},\dots;\xi_{1},\dots,\xi_{n},\dots)$ be such that $(\varphi_1,\tilde{\varphi}_2)\in\mathcal{S}(x)$, and for $N>0$ set $\tau_{R,N}:=\tau_{R}\wedge N,$ where $\tau_{R}:=\inf\{s>0:X^{x,\varphi_1,\tilde{\varphi}_2}_s\notin (-R,R)\}$, with the usual convention $\inf\emptyset=\infty$. Since $X^{x,\varphi_1,\tilde{\varphi}_2}_t\leq b^2_2$ $\mathbb{P}$-a.s.\ for all $t>0$, by the regularity of $v_1$ discussed in \emph{Step 1} we can apply the generalized It\^o's formula for semimartingales (see, e.g., {\O}ksendal and Sulem (2006), Theorems 2.1 and 6.2), so to obtain
\begin{align}
\label{1}
 v_1(x)=\mathbb{E}_x\Bigg[&-\int\limits_{0}^{\tau_{R,N}}e^{-r_1t}(\mathcal{L}_Xv_1-r_1v_1)(X^{\varphi_1,\tilde{\varphi}_2}_t)dt+e^{-r_1\tau_{R,N}}v_1(X^{\varphi_1,\tilde{\varphi}_2}_{\tau_{R,N}})\nonumber\\
 &-\sum\limits_{k:\,\tau_{1,k}<\tau_{R,N}}e^{-r_1\tau_{1,k}}\big(v_1(X^{\varphi_1,\tilde{\varphi}_2}_{\tau_{1,k}})-v_1(X^{\varphi_1,\tilde{\varphi}_2}_{\tau_{1,k}-})\big) \nonumber \\
&-\sum\limits_{k:\,\tilde{\tau}^{{\varphi}_1,\tilde{\varphi}_2}_{2,k}<\tau_{R,N}}e^{-r_1\tilde{\tau}^{{\varphi}_1,\tilde{\varphi}_2}_{2,k}}\big(v_1(X^{\varphi_1,\tilde{\varphi}_2}_{\tilde{\tau}^{{\varphi}_1,\tilde{\varphi}_2}_{2,k}})-v_1(X^{\varphi_1,\tilde{\varphi}_2}_{\tilde{\tau}^{{\varphi}_1,\tilde{\varphi}_2}_{2,k}-})\big)\Bigg].
\end{align}
By using again that $X^{x,\varphi_1,\tilde{\varphi}_2}_t \leq b^2_2$ for all $t>0$ $\mathbb{P}$-a.s., and since $(\mathcal{L}_Xv_1-r_1v_1)(x)\leq-\pi( x)$ for a.e. $x<b^2_2$ due to \eqref{L1}, we obtain from \eqref{1} that
\begin{align}
\label{2}
v_1(x)\geq\mathbb{E}_x\Bigg[&\int\limits_{0}^{\tau_{R,N}}e^{-r_1t}\pi(X^{\varphi_1,\tilde{\varphi}_2}_t)dt \,-\sum\limits_{k:\,\tau_{1,k}<\tau_{R,N}}e^{-r_1\tau_{1,k}}\big(v_1(X^{\varphi_1,\tilde{\varphi}_2}_{\tau_{1,k}})-v_1(X^{\varphi_1,\tilde{\varphi}_2}_{\tau_{1,k}-})\big) \nonumber \\
&-\sum\limits_{k:\,\tilde{\tau}^{{\varphi}_1,\tilde{\varphi}_2}_{2,k}<\tau_{R,N}}e^{-r_1\tilde{\tau}^{{\varphi}_1,\tilde{\varphi}_2}_{2,k}}\big(v_1(X^{\varphi_1,\tilde{\varphi}_2}_{\tilde{\tau}^{{\varphi}_1,\tilde{\varphi}_2}_{2,k}})-v_1(X^{\varphi_1,\tilde{\varphi}_2}_{\tilde{\tau}^{{\varphi}_1,\tilde{\varphi}_2}_{2,k}-})\big) +e^{-r_1\tau_{R,N}}v_1(X^{\varphi_1,\tilde{\varphi}_2}_{\tau_{R,N}})\Bigg].
\end{align}

In order to take care of the two sums in the expectation above, we define the nonlocal operator
\begin{align*}
\big(\mathcal{M}_1 v_1\big)(x)&:=\sup_{\xi \geq 0}\big\{v_1(x+\alpha\xi)-g_1(\xi)\big\},
\end{align*}
and we notice that $\tilde{\xi}_k$ of \eqref{impulses} is such that $\tilde{\xi}_k=\argsup_{\xi\geq 0}\big\{v_1(x+\alpha\xi)-g_1(\xi)\big\}$, for all $k \in \mathbb{N}$, due to \eqref{Cond6} and \eqref{Cond3}. Hence
\begin{align}
\label{intervention}
\big(\mathcal{M}_1v_1\big)(x)=
\begin{cases}
v_1(b^1_2)-K_1-\frac{\kappa_1}{\alpha}(b^1_2-x),\quad&\text{if } x\leq b^1_2,\\[1.5mm]
v_1(x)-K_1,\quad&\text{if } x> b^1_2.
\end{cases}
\end{align}

One can easily see from \eqref{w1} and \eqref{intervention} that $v_1(x)\geq \big(\mathcal{M}_1v_1\big)(x)$ for all $x\in(0,b^1_1]\cup(b^1_2,\infty)$, with equality for $x \leq b^1_1$. Then, noticing that $x \mapsto v_1(x)- \big(\mathcal{M}_1v_1\big)(x)$ is increasing for any $x \in (b^1_1, b^1_2]$ by \eqref{Cond6} and \eqref{intervention}, we conclude that $v_1(x)\geq \big(\mathcal{M}_1v_1\big)(x)$ for all $x>0$. Therefore
\begin{align}
\label{3}
v_1(X^{x,\varphi_1,\tilde{\varphi}_2}_{\tau_{1,k}-})\geq\big(\mathcal{M}_1v_1\big)(X^{x,\varphi_1,\tilde{\varphi}_2}_{\tau_{1,k}-})\geq v_1\big(X^{x,\varphi_1,\tilde{\varphi}_2}_{\tau_{1,k}}\big)-g_1(\xi_k),
\end{align}
for any $\mathcal{F}_{\tau_{1,k}}$-measurable $\xi_k\geq 0$. Moreover, because $X^{x,\varphi_1,\tilde{\varphi}_2}_{\tilde{\tau}^{{\varphi}_1,\tilde{\varphi}_2}_{2,k}-}\geq b^2_2$ $\mathbb{P}$-a.s.\ and $X^{x,\varphi_1,\tilde{\varphi}_2}_{\tilde{\tau}^{{\varphi}_1,\tilde{\varphi}_2}_{2,k}}=b^2_1$ $\mathbb{P}$-a.s., we find by \eqref{w1} that
\begin{align}\label{4}
v_1(X^{x,\varphi_1,\tilde{\varphi}_2}_{\tilde{\tau}^{{\varphi}_1,\tilde{\varphi}_2}_{2,k}-})=v_1(X^{x,\varphi_1,\tilde{\varphi}_2}_{\tilde{\tau}^{{\varphi}_1,\tilde{\varphi}_2}_{2,k}}),
\end{align}
upon noticing that $v_1(b^2_1) = w_1(b^2_1)$ since $b^2_1 \in (b^1_1, b^2_2)$. It thus follows from \eqref{3} and \eqref{4} that
\begin{equation}
\label{5}
v_1(x)\geq\mathbb{E}_x\Bigg[\int\limits_{0}^{\tau_{R,N}}e^{-r_1t}\pi( X^{x,\varphi_1,\tilde{\varphi}_2}_t)dt-\sum\limits_{k:\tau_{1,k}<\tau_{R,N}}e^{-r_1\tau_{1,k}}g_1(\xi_k) + e^{-r_1\tau_{R,N}}v_1(X^{x,\varphi_1,\tilde{\varphi}_2}_{\tau_{R,N}})\Bigg].
\end{equation}

But now, $v$ is continuous and $X_t^{x,\varphi_1,\tilde{\varphi}_2}\in[0,b^2_2]$ $\mathbb{P}$-a.s.\ by admissibility of $(\varphi_1,\tilde{\varphi}_2)$. Hence, we can write 
$$e^{-r_1\tau_{R,N}}v_1(X^{x,\varphi_1,\tilde{\varphi}_2}_{\tau_{R,N}}) \geq - e^{-r_1\tau_{R,N}}|v_1(X^{x,\varphi_1,\tilde{\varphi}_2}_{\tau_{R,N}})| \geq - e^{-r_1\tau_{R,N}}\max_{x \in [0,b^2_2]}|v_1(x)|,$$
and from \eqref{5} we have
\begin{equation}
\label{6}
v_1(x)\geq\mathbb{E}_x\Bigg[\int\limits_{0}^{\tau_{R,N}}e^{-r_1t}\pi( X^{x,\varphi_1,\tilde{\varphi}_2}_t)dt-\sum\limits_{k:\tau_{1,k}<\tau_{R,N}}e^{-r_1\tau_{1,k}}g_1(\xi_k) - e^{-r_1\tau_{R,N}}\max_{x \in [0,b^2_2]}|v_1(x)|\Bigg].
\end{equation} 
By using the dominated convergence theorem for the last term in \eqref{6} and the monotone convergence theorem for the integral and the series in \eqref{6}, we let first $R\rightarrow\infty$ and then $N\rightarrow\infty$, and we find
\begin{align*}
v_1(x)\geq \mathcal{J}_1(x,\varphi_1,\tilde{\varphi}_2).
\end{align*}
Finally, by construction we also have $v_1(x)=\mathcal{J}_1(x,\tilde{\varphi}_1,\tilde{\varphi}_2).$

Because arguments analogous to the ones employed for $v_1$ yield $v_2(x)\leq\mathcal{J}_2(x,\tilde{\varphi}_1,\varphi_2)$ for all $\varphi_2$ such that $(\tilde{\varphi}_1,\varphi_2)\in\mathcal{S}(x)$, and $v_2(x)=\mathcal{J}_2(x,\tilde{\varphi}_1,\tilde{\varphi}_2)$, we conclude that $(\tilde{\varphi}_1,\tilde{\varphi}_2)$ are equilibrium policies and $(v_1,v_2)$ are the corresponding equilibrium values.
\end{proof}

\begin{remark}
As a byproduct of Theorem \ref{Verification} we have that, if \eqref{Cond6}-\eqref{hx2bis} are fulfilled, then $v_1$ and $v_2$ satisfy in the a.e.\ sense the system of QVIs 
\begin{align}\label{7}
\begin{split}
\max\{\big(\mathcal{L}v_1-r_1v_1\big)(x)+\pi( x),\,\mathcal{M}_1v_1(x)-v_1(x)\}=0,\quad &\text{for a.e. } x< b^{2}_2,\\
\min\{\big(\mathcal{L}v_2-r_2v_2\big)(x)+C(\beta x),\,\mathcal{M}_2v_2(x)-v_2(x)\}=0,\quad &\text{for a.e. } x> b^{1}_1,\\
v_1(x)\geq \mathcal{M}_1v_1(x),\quad&\forall x>0,\\
v_2(x)\leq \mathcal{M}_2v_2(x),\quad&\forall x>0,\\
v_1(x)=v_1\big(b^{2}_1\big),\quad &\forall x\geq b^{2}_2,\\
v_2(x)=v_2\big(b^{1}_2\big),\quad &\forall x\leq b^{1}_1.
\end{split}	
\end{align}
A system analogous to \eqref{7} has been introduced in the context of nonzero-sum stochastic differential games with impulse controls in A{\"i}d et al.\ (2018).
\end{remark}

%%%%%%%%%%%%%%%%%%%%%%%%%%%%%%%%%%%%%%%%%%%%%%%%%%%%%%%%%%%%%%%%%%%%%%%%%%%%%%%%%%%%%%%%%%%%%%%%%%%%%%%%%%%%%%%%%%%%%%%%%%%%%%%%%%%%%%%%%%%%

\section{A Numerical Example and Comparative Statics}
\label{sec:numeric}
Verification Theorem \ref{Verification} involves the highly nonlinear system of \emph{four} algebraic equations \eqref{eq1}--\eqref{eq4} for the \emph{four} boundaries. We have solved this system numerically in a specific setting by using MATLAB. In particular, for the numerical example we have assumed that the uncontrolled output of production evolves as a geometric Brownian motion, i.e.\ $\mu(x) = \mu x$ and $\sigma(x) = \sigma x $ for some $\mu \in \mathbb{R}$ and $\sigma>0$. Moreover, we have taken an operating profit function of Cobb-Douglas type $\pi(x) = x^a$, $a \in (0,1)$, and a social disutility function of the form $C(x) = x^b$, $b>1$.

Among the possible parameters' values satisfying Assumption \ref{Ass1}, we pick for example those provided in Table \ref{table},
\begin{table}[h]
	\centering
	\begin{tabular}   {| l | l | l | l | l | l | l | l | l | l | l | l |}
		\hline
		$\mu$ & $\sigma$ & $r_1$ & $r_2$ & $\alpha$ & $\beta$ & $K_1$ & $\kappa_1$ & $K_2$ & $\kappa_2$ & $a$ & $b$ \\ \hline
		0.02 & 0.20 & 0.10 & 0.10 & 1 & 1 & 0.5 & 0.8 & 0.6 & 0.3& 0.5 & 2 \\
		\hline
	\end{tabular}
	\caption{Parameters' values for the numerical example.}
	\label{table}
\end{table}
and we notice that for such a choice the performance criteria associated with no interventions (cf.\ \eqref{G1G2}) are given by
\begin{align}
G_1(x)=\frac{1}{r_1 - \frac{\mu}{2} + \frac{\sigma^2}{8}}\sqrt{x} = \frac{1000}{95}\sqrt{x},\quad \text{and}\quad
G_2(x)=\frac{1}{r_2 - 2\mu - \sigma^2}x^2 = 50 x^2.
\end{align}

Also, by an application of the Newton method in MATLAB, we find that the numerical solution to \eqref{eq1}-\eqref{eq4} is given by
\begin{align*}
b_1^{1}&=0.1558984470,\quad b_2^{1}=0.3825673799,\\ b_1^{2}&=0.2359455020,\quad b_2^{2}=0.5746537199,
\end{align*}
where we have evaluated $w_i(b^1_2)$ and $w_i(b^2_1),i=1,2,$ by \eqref{cond2}-\eqref{cond5}. One also finds (cf.\ \eqref{hx1}-\eqref{hx2bis})
\begin{align*}
\hat{x}_1&=\big[2\kappa_1(r_1-\mu)\big]^{-2}=61.03515625>b^1_1,\quad &\pi(b^1_1)+\frac{c_1}{\alpha}\mu(b^1_1)-r_1v_1(b^1_1)=-0.0727643376\leq0,\\
\hat{x}_2&=\frac{\kappa_2(r_2-\mu)}{2}=0.012<b^2_2,\quad &C(\beta b^2_2)+\kappa_2\mu(b^2_2)-r_2v_2(b_2^2)=0.1390988361\geq0.
\end{align*}
The plots of the equilibrium values and of their derivatives in the joint inaction region $(b^1_1,b^2_2)$ are provided in Figures \ref{figure1}, \ref{figure2}, and \ref{figure3} and \ref{figure4}, respectively. In Figures \ref{figureSA} and \ref{figureSAG} one observes the drawings of the value functions that the firm and the government would have in a non-strategic setting (i.e.\ if the two agents optimize their own performance criterion in absence of the other agent).
\begin{figure}[htbp] 
	\subfigure[\label{figure1}Equilibrium value $V_1$ in $(b^1_1,b^2_2)$.]{\includegraphics[width=0.5\textwidth]{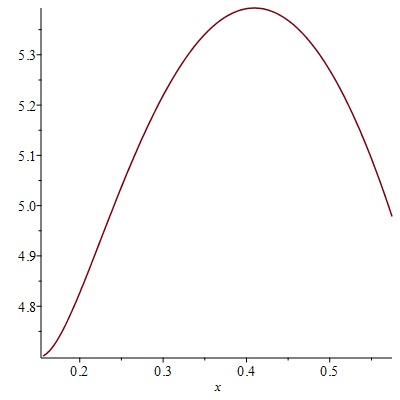}}
	\subfigure[\label{figure2}Equilibrium value $V_2$ in $(b^1_1,b^2_2)$.]{\includegraphics[width=0.5\textwidth]{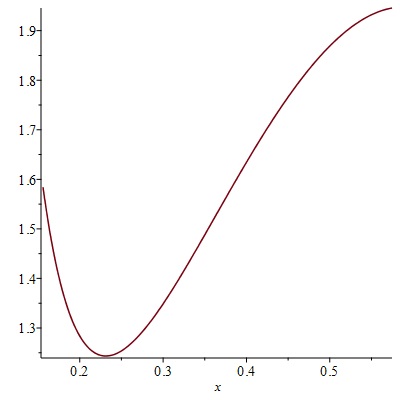}}
	\subfigure[\label{figureSA}Value function of the firm in the \emph{inaction} region for a non-strategic model.]{\includegraphics[width=0.5\textwidth]{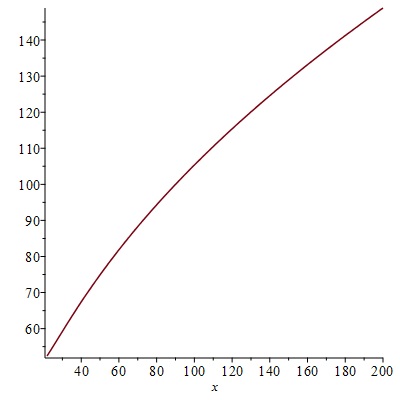}}
	\subfigure[\label{figureSAG}Value function of the government in the \emph{inaction} region for a non-strategic model.]{\includegraphics[width=0.5\textwidth]{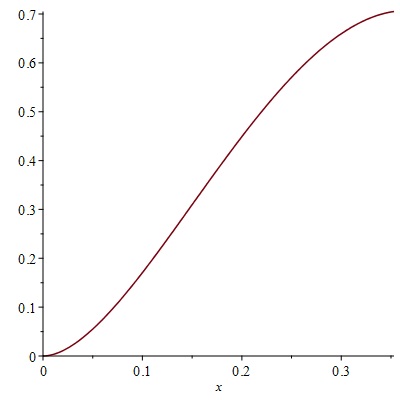}}
	\caption{Value functions in the strategic and non-strategic setting.}
\end{figure}

Comparing Figures \ref{figure1} and \ref{figure2} with Figures \ref{figureSA} and \ref{figureSAG}, one can notice that the value functions that the two agents would have in a non-strategic setting are monotone with respect to the state variable. On the contrary, the equilibrium values $V_1$ and $V_2$ are not monotone functions, and this is clearly a consequence of the strategic interaction between the two agents. From Figures \ref{figure3} and \ref{figure4} one can also check that conditions \eqref{Cond6}-\eqref{Cond5} are satisfied.
 \begin{figure}[htbp] 
 	\subfigure[\label{figure3}Derivative of $V_1$.]{\includegraphics[width=0.5\textwidth]{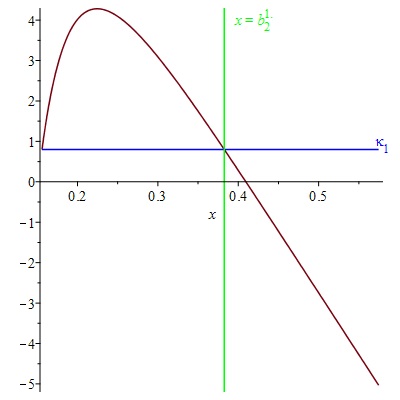}}
 	\subfigure[\label{figure4}Derivative of $V_2$.]{\includegraphics[width=0.5\textwidth]{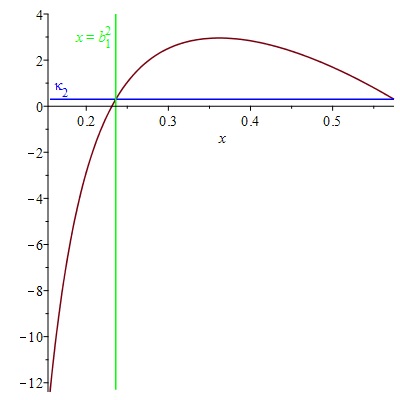}}
 	\caption{Derivatives of the equilibrium values.}
 \end{figure}

We now discuss the dependency of our equilibrium policies with respect to the model parameters. The following plots are obtained with MATLAB through an application of the Newton method initialized at the parameters' values specified in Table \ref{table} above.

Figure \ref{figure5} displays the behavior of the optimal action boundaries $b^1_1$ and $b^2_2$ when the volatility $\sigma$ varies in the range $[0.19,0.22]$. Furthermore, Figure \ref{figure6} shows how the optimal size of interventions, $b^1_2-b^1_1$ and $b^2_2-b^2_1$, changes with $\sigma$.
\begin{figure}[htbp] 
	\subfigure[\label{figure5}The optimal action boundaries $b^1_1$ (black), $b^1_2$ (blue), $b^2_1$ (red), $b^2_2$ (green).]{\includegraphics[width=0.5\textwidth]{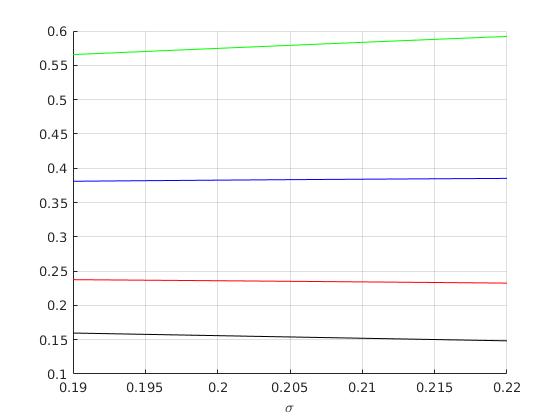}}
	\subfigure[\label{figure6}Optimal size of interventions: firm (blue) and government (black).]{\includegraphics[width=0.5\textwidth]{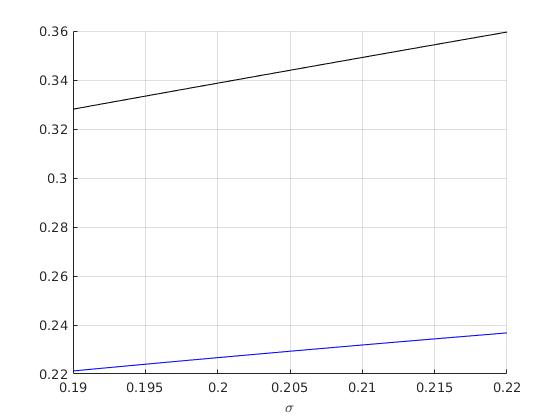}}
	\caption{Dependency of the equilibrium on the volatility $\sigma$.}
		\subfigure[\label{figure7}The optimal action boundaries $b^1_1$ (black), $b^1_2$ (blue), $b^2_1$ (red), $b^2_2$ (green).]{\includegraphics[width=0.5\textwidth]{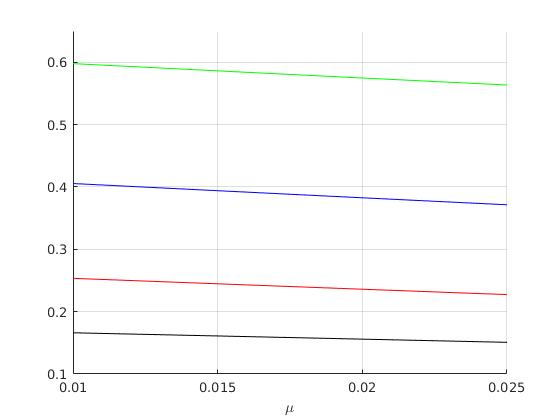}}
	\subfigure[\label{figure8}Optimal size of interventions: firm (blue) and government (black).]{\includegraphics[width=0.5\textwidth]{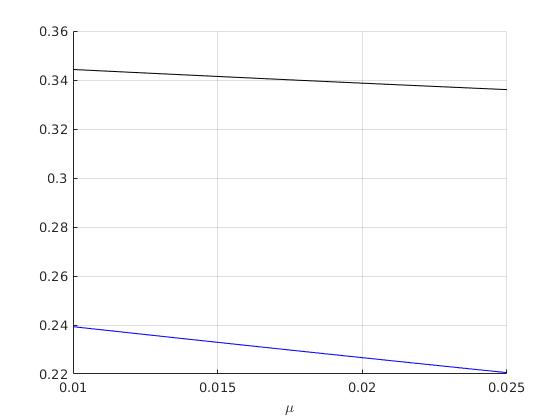}}
	\caption{Dependency of the equilibrium on the drift $\mu$.}
\end{figure}
One can observe that the optimal action threshold of the government increases with $\sigma$, whereas the firm's action threshold decreases. This behavior is well-known in the real options literature (see the seminal paper by McDonald and Siegel (1986)): when uncertainty increases, the agent is more reluctant to act and her inaction region becomes larger. Furthermore, Figure \ref{figure6} reveals that the strength of interventions of the firm and of the government increases with increasing volatility. The higher are the fluctuations of the production/pollution process, the more the agents are afraid of a quicker need of a new costly intervention. Hence both the agents increase the size of their impulses in order to postpone their next action.

We now take $\sigma=0.2$, and we let $\mu$ vary in the interval $[0.01,0.025]$.
%\begin{figure}[htbp] 
%	\subfigure[\label{figure7}The optimal action boundaries $b^1_1$ (black), $b^1_2$ (blue), $b^2_1$ (red), $b^2_2$ (green) with respect to $\mu$.]{\includegraphics[width=0.5\textwidth]{mu1.jpg}}
%	\subfigure[\label{figure8}Optimal size of interventions: firm (blue) and government (black).]{\includegraphics[width=0.5\textwidth]{mu2.jpg}}
%	\caption{Dependency of the equilibrium on the drift.}
%\end{figure}
\begin{figure}[htbp] 
	\subfigure[\label{figure9}The optimal action boundaries $b^1_1$ (black), $b^1_2$ (blue), $b^2_1$ (red), $b^2_2$ (green).]{\includegraphics[width=0.5\textwidth]{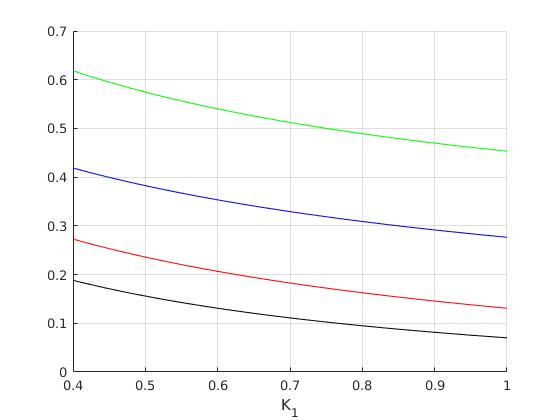}}
	\subfigure[\label{figure10}Optimal size of interventions: firm (blue) and government (black).]{\includegraphics[width=0.5\textwidth]{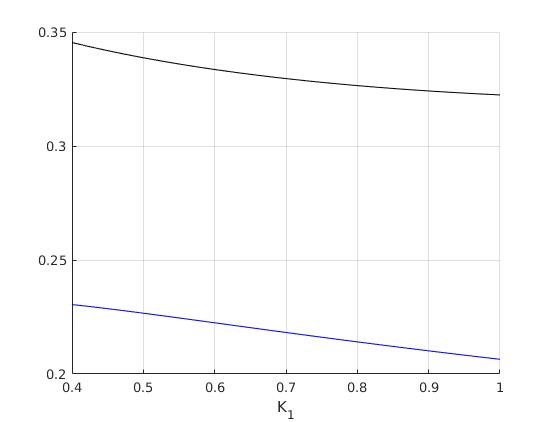}}
	\caption{Dependency of the equilibrium on the firm's fixed cost $K_1$.}
	\label{figure13}
	\subfigure[\label{figure11}The optimal action boundaries $b^1_1$ (black), $b^1_2$ (blue), $b^2_1$ (red), $b^2_2$ (green).]{\includegraphics[width=0.5\textwidth]{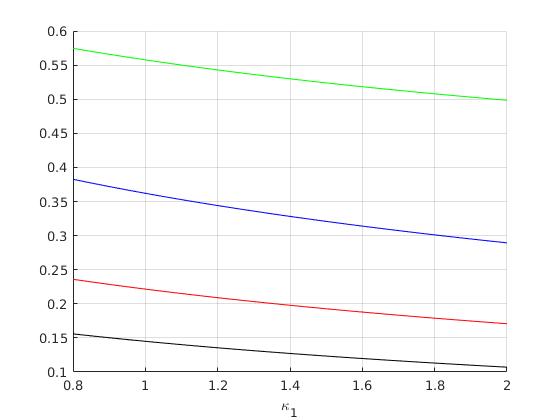}}
	\subfigure[\label{figure12}Optimal size of interventions: firm (blue) and government (black).]{\includegraphics[width=0.5\textwidth]{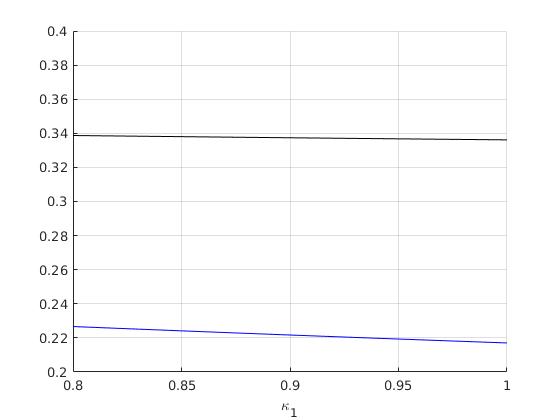}}
	\caption{Dependency of the equilibrium on the firm's variable cost $\kappa_1$.}
	\label{figure14}
\end{figure}
Figure \ref{figure7} leads us to the following conclusion: as the  drift $\mu$ increases, the firm's action region becomes smaller. That is, a higher trend of the output of production decreases the firm's willingness to intervene. We can also observe from Figure \ref{figure7} that the government's threshold decreases with $\mu$: since the output of production, and therefore the rate of emissions, increases faster, the government tries to dam the increasing social cost by introducing more severe regulatory constraints. Figure \ref{figure8} shows that the higher the trend of the output of production is, the lower is the size of interventions $b^1_2-b^1_1$, i.e.\ the lower the willingness of the firm to pay for additional capacity. Also, one can observe that the government's size of interventions decrease with increasing $\mu$. We believe that this effect is due to the strategic interactions between the two agents, and it might be justified as follows. The higher $\mu$ is, the smaller is the length of the joint inaction region (see Figure \ref{figure7}). Hence, the government reduces the size of interventions when $\mu$ increases so to likely reduce the firm's incentive to intervene.

Finally, we analyze the dependency of the action thresholds and of the equilibrium impulses' size with respect to the cost components $K_1$ and $\kappa_1$ (see Figures \ref{figure13} and \ref{figure14}). Similar behaviors are also observed with respect to $K_2$ and $\kappa_2$.
Higher fixed costs lead to decreasing action boundaries, see Figure \ref{figure9}, and therefore to a larger inaction region of the firm. As a consequence, the government exploits the firm's reluctance to invest when fixed costs are larger and confines the production process below a lower level. A particular comment is deserved by Figure \ref{figure10} where we observe that the sizes of interventions of both agents are decreasing with respect to $K_1$. This behavior might be explained once more as an effect of the strategic interaction between the two agents. When $K_1$ increases, the firm reduces the size of its interventions in order to likely avoid a possible further action by the government, and, in turn, a further costly capacity expansion. As a result of the reduction of the joint inaction region (see Figure \ref{figure9}), the government also diminishes its size of interventions so to try to prevent the firm from undertaking a further capacity expansion. A similar rationale might also explain the behavior of the equilibrium thresholds and equilibrium impulses' sizes with respect to the variable costs $\kappa_1$.

%%%%%%%%%%%%%%%%%%%%%%%%%%%%%%%%%%%%%%%%%%%%%%%%%%%%%%%%%%%%%%%%%%%%%%%%%%%%%%%%%%%%%%%%%%%%%%%%%%%%%%%%%%%%%%%%%%%%%%%%%%%%%%%%%%%%%%%%%%%%%%

\section{Conclusions}
\label{conclusions}

In this paper a government and a firm, representative of the productive sector of a country, are the two players of a stochastic nonzero-sum game of impulse control. The firm faces both proportional and fixed costs to expand its stochastically fluctuating production with the aim of maximizing its expected profits. The government introduces regulatory constraints with the aim of reducing the level of emissions of pollutants and of minimizing the related total expected costs. Assuming that the emissions' level is proportional to the output of production, by issuing environmental policies the government effectively forces the firm to decrease its production.

We have conjectured that an equilibrium in this strategic problem is characterized by four constant trigger values, to be endogenously determined. We have then provided a set of sufficient conditions under which these candidate equilibrium policies do indeed form an equilibrium. Finally, we have studied numerically the case in which the (uncontrolled) output of production evolves as a geometric Brownian motion, and the firm's operating profit and the government's running cost function are of power type. Within such a setting, a study of the dependency of the equilibrium policies and values on the model parameters have yielded interesting new behaviors that we have explained as a result of the strategic interaction between the firm and the government.

There are many directions in which it would be interesting to extend the present study. As an example, one might consider a two-dimensional formulation of our game in which the state variables are given by the production capacity of the firm and the level of pollution. The firm faces a costly capacity expansion and maximizes its net expected profits. The output of production, however, increases the emissions, which in turn contribute to the accumulation of a pollution stock. The government aims at reducing the level of the pollution stock by issuing costly environmental policies. This would lead to a daunting two-dimensional stochastic game with impulse controls for which a sophisticated theoretical and numerical analysis might be needed.

\vspace{1cm}

\indent \textbf{Acknowledgments.} We thank two anonymous referees for their pertinent remarks and suggestions. We also wish to thank Giorgia Callegaro, Herbert Dawid, Frank Riedel and Jan-Henrik Steg for useful comments. Financial support by the German Research Foundation (DFG) through the Collaborative Research Centre 1283 ``Taming uncertainty and profiting from randomness and low regularity in analysis, stochastics and their applications'' is gratefully acknowledged by the authors.

%%%%%%%%%%%%%%%%%%%%%%%%%%%%%%%%%%%%%%%%%%%%%%%%%%%%%%%%%%%%%%%%%%%%%%%%%%%%%%%%%%%%%%%%%%%%%%%%%%%%%%%%%%%%%%%%%%%%%%%%%%%%%%%%%%%%%%%%%%%%%%%

\end{document}